\newtheorem{teo}{Theorem}[section]
\newtheorem{lem}[teo]{Lemma}
\newtheorem{prop}[teo]{Proposition}
\newtheorem{cor}[teo]{Corollary}
\theoremstyle{definition}
\newtheorem{dfn}[teo]{Definition}
\newtheorem{rk}[teo]{Remark}
\newtheorem{ex}[teo]{Example}
\def\<{\langle}
\def\>{\rangle}
\def\ss{\subset}
\def\sse{\subseteq}
\def\a{\alpha}
\def\e{\varepsilon}
\def\l{{\lambda}}
\def\f{{\varphi}}
\def\F{{\Phi}}
\def\C{{\mathbb C}}
\def\A{{\mathcal A}}
\def\M{{\mathcal M}}
\def\cN{{\mathcal N}}
\def\cZ{{\mathcal Z}}
\def\1{\mathbf 1}
\def\N{{\mathbb N}}
\begin{document}
\title[Uniform structure on  Hilbert $C^*$-modules]
{A new uniform structure for  Hilbert $C^*$-modules}
\author{Denis Fufaev}
\address{Moscow Center for Fundamental and Applied Mathematics,
Dept. of Mech. and Math., 
	Lomonosov Moscow State University, 119991 Moscow, Russia}
\email{fufaevdv@rambler.ru, denis.fufaev@math.msu.ru}
\author{Evgenij Troitsky}
\thanks{This work is
supported by the Russian Science Foundation
under grant 23-21-00097.}
\address{Moscow Center for Fundamental and Applied Mathematics,
Dept. of Mech. and Math., 
	Lomonosov Moscow State University, 119991 Moscow, Russia}
\email{evgenij.troitsky@math.msu.ru}

\keywords{Hilbert $C^*$-module, uniform structure, multiplier,
totally bounded set, compact operator, $\A$-compact operator}

\subjclass{46L08; 47B10; 47L80; 54E15}

\begin{abstract}
We introduce and study some new uniform structures for Hilbert $C^*$-modules over 
an algebra $\A$. In particular, we prove that in some cases they have the same totally bounded 
sets. To define one of them, we introduce a new class of $\A$-functionals: locally adjointable functionals, which have interesting properties in this context and seem to be of independent interest.
A relation between these uniform structures and the theory of $\A$-compact operators is established.
\end{abstract}

\maketitle

\section*{Introduction}
In the theory of Hilbert $C^*$-modules there are problems in which the necessity to construct uniform structures arises naturally. More precisely, this is the case for the theory of $\A$-compact operators. In the case of Hilbert spaces, i.e. in the case $\A=\C$, the geometric description of such operators is well known: the operator is compact if and only if the image of the unit ball is totally bounded in norm. In general, this is not true for Hilbert $C^*$-modules: even if we take any infinite-dimensional unital $C^*$-algebra as a module over itself and the identity operator, it is $\A$-compact (it has $\A$-rank one), but the unit ball is not totally bounded due to infinite dimension.
Therefore, to describe the $\A$-compactness property in geometric terms, it is necessary to construct a new geometric structure on the Hilbert $C^*$-module, for example, a uniform structure, i.e. a system of pseudometrics or seminorms.
This problem was considered to be unsolvable in reasonable generality for a long time.
Only partial advances were obtained in \cite{lazovic2018,KeckicLazovic2018}. 
Nevertheless in \cite{Troitsky2020JMAA} a uniform structure was discovered that gave a solution in the case of any algebra and any countably generated module as the range module of the operator under consideration. Namely, if $F:\M\to\cN$ is an adjointable operator and $\cN$ is countably generated then $F$ is $\A$-compact if and only if the image of the unit ball is totally bounded with respect to each defining seminorm for the uniform structure. In \cite{TroitFuf2020} the result was strengthened: the necessity of the condition was established for arbitrary modules, the sufficiency was established for modules with some analogue of the projectivity property (it turns out that this property is equivalent to the existence of a standard frame). However, by using this uniform structure the problem cannot be completely solved. In particular, in \cite{Fuf2021faa} a counterexample was constructed: a specific $C^*$-algebra, considered as a module over itself, for which the identity operator is not $\A $-compact, but the unit ball is totally bounded with respect to the introduced uniform structure (in \cite{Fuf2022Path} this work was continued with a close relation
to the theory of frames).
This close connection with the theory of frames has its origin in the fact that Bessel sequences 
in the module context are involved in the construction of the above seminorms. 

The attempts to solve the above problem in full generality lead to the problem of search for more general uniform structures analogous to that considered in above papers.
The idea is to take in the definition of a Bessel sequence elements not from the module itself, but from some larger module. In particular, it is possible to replace elements of the module by $\A$-linear functionals.

In the present paper we introduce some new uniform structures constructed in this way and establish that in some cases they have the same totally bounded sets as the old uniform structure
\cite{Troitsky2020JMAA}.

We also define a new class of $\A$-functionals, slightly more general than the class of adjointable functionals --- locally adjointable functionals. By their properties they are similar to left multipliers, but in some cases they can be described simply in terms of multipliers.

In Section 1 we first recall some facts about $C^*$-algebras and Hilbert $C^*$-modules which we need. Then introduce new uniform structures which generalize the old one in a natural way. Also we obtain some useful properties (Lemma \ref{12then3}, Lemma \ref{new_adm}, Lemma \ref{bounded}). In particular, we prove, that boundedness with respect to any of these uniform structures implies boundedness in norm (that is not true typically for uniform structures, for example, for weak topology).

In Section 2 we deal with the uniform structure which is constructed via  multipliers, and prove that any set is totally bounded with respect to it if and only if it is totally bounded with respect to the old one. This result holds for arbitrary module $\cN$.

In Section 3 we work with the uniform structure which is constructed using a more general class of functionals, the locally adjointable functionals, and prove a similar result but only for standard and countably generated modules.
It turns out that the results on the structure of functionals on the standard module, obtained in \cite{Bak2019}, as well as the Kasparov stabilization theorem, which allows us to reduce the problem to the case of the standard module, play a significant role here.

\section{Preliminaries and formulation of results}
We start with several statements about states on $C^*$-algebras. 

\begin{lem}\label{lem:comparkvadr} \cite[Theorem 3.3.2]{Murphy}
For any state $\f$ on $\A$ and any $a\in\A$ one has
$|\f(a)|^2\le \f(a^*a)$.
\end{lem}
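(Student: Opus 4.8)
The plan is to derive the inequality as the special case $b=\1$ of the Cauchy--Schwarz inequality attached to the state $\f$. First I would use only that $\f$ is a \emph{positive} linear functional, i.e. $\f(c^*c)\ge0$ for all $c\in\A$. This makes $(a,b)\mapsto\f(b^*a)$ a positive semidefinite sesquilinear form on $\A$, and it also forces $\f$ to be self-adjoint, $\f(x^*)=\overline{\f(x)}$.

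Second, I would run the standard argument for such forms. For any $\l\in\C$,
\[
0\le\f\big((a-\l b)^*(a-\l b)\big)=\f(a^*a)-2\RE\big(\bar\l\,\f(b^*a)\big)+|\l|^2\f(b^*b),
\]
where self-adjointness was used to combine the two cross terms. Choosing the phase of $\l$ so that $\bar\l\,\f(b^*a)\ge0$ and then minimizing the resulting real quadratic in $|\l|$ yields the Cauchy--Schwarz inequality $|\f(b^*a)|^2\le\f(a^*a)\,\f(b^*b)$.

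Third, I would specialize. In the unital case one sets $b=\1$ and uses $\f(\1)=1$: then $\f(b^*a)=\f(a)$ and $\f(b^*b)=1$, so that $|\f(a)|^2\le\f(a^*a)$, as claimed.

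The step requiring the most care is the non-unital case, where $\1\notin\A$. Here I would replace $\1$ by an approximate unit $(e_\mu)$, noting that for a state $\f(e_\mu)\to1$ while $0\le e_\mu^2\le e_\mu$ gives $\f(e_\mu^2)\le\f(e_\mu)$. Applying the inequality with $b=e_\mu$ yields $|\f(e_\mu a)|^2\le\f(a^*a)\,\f(e_\mu^2)\le\f(a^*a)\,\f(e_\mu)$, and passing to the limit, using $\|e_\mu a-a\|\to0$ together with $\|\f\|=1$ to get $\f(e_\mu a)\to\f(a)$, recovers $|\f(a)|^2\le\f(a^*a)$. Equivalently, one may extend $\f$ to a state on the unitization $\A^{+}$ and reduce directly to the unital computation; this is the route I would actually take to keep the argument short.
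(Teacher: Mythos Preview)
Your argument is correct and is exactly the standard proof of the Cauchy--Schwarz inequality for positive functionals; note that the paper itself does not prove this lemma but merely cites \cite[Theorem~3.3.2]{Murphy}, where essentially the same argument (positive semidefinite form plus specialization $b=\1$, with the non-unital case handled via the unitization) is given.
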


\begin{lem}\label{lem:estimfornonpositive} \cite[Lemma 1.2]{Troitsky2020JMAA}
For any $a\in \A$ there is a state $\f$ such that
$\|a\|\le 2 |\f(a)|.$  
\end{lem}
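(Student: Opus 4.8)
The plan is to reduce the general case to the self-adjoint one via the Cartesian decomposition. I would write $a = h + ik$, where $h = (a+a^*)/2$ and $k = (a-a^*)/(2i)$ are self-adjoint. From $a = h+ik$ one gets $\|a\| \le \|h\| + \|k\|$, so at least one of the two summands is at least $\frac{1}{2}\|a\|$; without loss of generality assume $\|h\| \ge \frac{1}{2}\|a\|$, the case $\|k\|\ge\frac12\|a\|$ being entirely symmetric.

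The key input is the standard fact that a self-adjoint element of a $C^*$-algebra attains its norm against some state: there is a state $\f$ on $\A$ with $|\f(h)| = \|h\|$. I would justify this by passing to the commutative $C^*$-subalgebra generated by $h$ (and the unit), where by Gelfand theory the norm equals the spectral radius, so one of the values $\pm\|h\|$ lies in the spectrum of $h$ and is realized by a character; this character extends by Hahn--Banach to a state on $\A$, and weak-$*$ compactness of the state space guarantees the relevant supremum $\|h\| = \sup\{|\f(h)| : \f \text{ a state}\}$ is actually attained.

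With such a $\f$ fixed, the conclusion is a one-line computation. Since $h$ and $k$ are self-adjoint and $\f$ is a state, both $\f(h)$ and $\f(k)$ are real, so $\f(a) = \f(h) + i\f(k)$ satisfies $|\f(a)|^2 = \f(h)^2 + \f(k)^2 \ge \f(h)^2 = \|h\|^2 \ge \tfrac{1}{4}\|a\|^2$. Taking square roots yields $|\f(a)| \ge \frac{1}{2}\|a\|$, that is, $\|a\| \le 2|\f(a)|$.

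The only genuine obstacle is the norm-attainment fact for self-adjoint elements; everything else is elementary. In the non-unital setting one simply passes to the unitization $\til{\A}$ and uses that a self-adjoint $h$ still satisfies $\|h\| = \sup\{|\f(h)| : \f \text{ a state}\}$ with the supremum attained, so the argument goes through verbatim.
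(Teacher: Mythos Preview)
Your argument is correct. The Cartesian decomposition $a=h+ik$, the bound $\|a\|\le\|h\|+\|k\|$, the attainment $|\f(h)|=\|h\|$ for a self-adjoint $h$, and the computation $|\f(a)|^2=\f(h)^2+\f(k)^2\ge\|h\|^2\ge\tfrac14\|a\|^2$ are all valid, and the non-unital case is handled exactly as you say by passing to $\til{\A}$.

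As for comparison: the present paper does not prove this lemma at all --- it merely quotes it from \cite[Lemma 1.2]{Troitsky2020JMAA} without reproducing the argument --- so there is no in-paper proof to compare your approach against. Your proof is a standard and efficient route to the statement; nothing is missing.
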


\begin{lem}\label{uniform} \cite[Lemma 2.1]{Fuf2023}
Let $\f$ be an arbitrary state on $C^*$-algebra $\A$, $\{e_\l\}$ --- an approximate identity in $\A$. Then 
$\f(x-e_\l x)\to 0$ uniformly on bounded sets.
Moreover, for any $n\in \N$ there exists a positive $g_n\in\A$, $||g_n||\le1$, such that $|\f(x)-\f(g_nx)|\le\frac{||x||}{n}$ for any $x\in\A$. 
\end{lem}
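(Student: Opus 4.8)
The plan is to reduce both assertions to a single uniform estimate coming from the Cauchy--Schwarz inequality for the positive sesquilinear form $(a,b)\mapsto\f(b^*a)$. First I would pass to the unitization $\til\A$ and extend $\f$ to a state with $\f(1)=1$, so that $1-e_\l$ is a legitimate element; recall that the members of an approximate identity satisfy $0\le e_\l\le 1$, and that for a state one has the standard convergence $\f(e_\l)\to 1$ (the norm of a positive functional is attained in the limit along any approximate unit).

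The key step is the estimate
\[
|\f(x-e_\l x)|^2=|\f((1-e_\l)x)|^2\le \f((1-e_\l)^2)\,\f(x^*x).
\]
Here I apply the generalized Cauchy--Schwarz inequality with $a=x$ and $b=1-e_\l$ (which is self-adjoint); Lemma \ref{lem:comparkvadr} is precisely its special case $b=1$. Since $0\le 1-e_\l\le 1$ we have the operator inequality $(1-e_\l)^2\le 1-e_\l$, whence $\f((1-e_\l)^2)\le\f(1-e_\l)=1-\f(e_\l)$; combined with $\f(x^*x)\le\norm{x}^2$ this yields
\[
|\f(x-e_\l x)|\le\sqrt{1-\f(e_\l)}\;\norm{x}.
\]

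Both assertions now follow. For the first, on a bounded set $\{\norm{x}\le R\}$ the right-hand side is at most $R\sqrt{1-\f(e_\l)}$, which tends to $0$ independently of $x$ because $\f(e_\l)\to1$; this is exactly uniform convergence on bounded sets. For the second, fix $n\in\N$ and choose an index $\l_0$ with $1-\f(e_{\l_0})\le 1/n^2$; then $g_n:=e_{\l_0}$ is positive with $\norm{g_n}\le1$, and the displayed estimate gives $|\f(x)-\f(g_nx)|\le\norm{x}/n$ for every $x\in\A$.

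I expect the only genuine subtlety to be bookkeeping rather than a real obstacle: correctly justifying the passage to $\til\A$ (so that $1-e_\l$ and the inequality $(1-e_\l)^2\le 1-e_\l$ are meaningful) and invoking the full Cauchy--Schwarz inequality rather than the $b=1$ version recorded in Lemma \ref{lem:comparkvadr}. The one external fact I rely on is $\f(e_\l)\to\norm{\f}=1$, the standard behaviour of states on approximate units; everything else is the decoupling of the $e_\l$-factor from $x$ afforded by Cauchy--Schwarz together with the elementary operator inequality.
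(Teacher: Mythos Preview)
The paper does not actually prove this lemma; it is quoted without proof from \cite[Lemma 2.1]{Fuf2023}. Your argument is correct and is essentially the standard one: the Cauchy--Schwarz inequality for the sesquilinear form $(a,b)\mapsto\f(b^*a)$ decouples the factor $1-e_\l$ from $x$, the operator inequality $(1-e_\l)^2\le 1-e_\l$ (valid since $0\le e_\l\le 1$) reduces everything to $\f(e_\l)\to 1$, and the latter is the well-known fact that a positive functional attains its norm along any approximate unit. The passage to the unitization and the extension of $\f$ are routine, so there is no gap.
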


Also we need some basic facts about Hilbert
$C^*$-modules over $\A$ and $\A$-compact operators in them. 
One can find 
details and proofs
in books \cite{Lance,MTBook} and 
the survey paper \cite{ManuilovTroit2000JMS}. Some other
directions joining Hilbert $C^*$-modules and operator
theory can be found in
 \cite{FMT2010Studia,PavlovTro2011,BlanchGogi,TroManAlg}.

\begin{dfn}
A (right) pre-Hilbert $C^*$-module over a $C^*$-algebra $\A$
is an $\A$-module equipped with an $\A$-\emph{inner product}
$\<.,.\>:\M\times\M\to \A$ being a sesquilinear form on the
underlying linear space and restricted to satisfy:
\begin{enumerate}
\item $\<x,x\> \ge 0$ for any $x\in\M$;
\item $\<x,x\> = 0$ if and only if $x=0$;
\item $\<y,x\>=\<x,y\>^*$ for any $x,y\in\M$;
\item $\<x,y\cdot a\>=\<x,y\>a$ for any $x,y\in\M$, $a\in\A$.
\end{enumerate}

A pre-Hilbert $C^*$-module over $\A$ is a 
\emph{Hilbert $C^*$-module}
if it is complete w.r.t. its norm $\|x\|=\|\<x,x\>\|^{1/2}$.

A Hilbert $C^*$-module $\M$ is \emph{countably generated}
if there exists a countable set of its elements with dense
set of $\A$-linear combinations.

We will denote by $\oplus$ the Hilbert sum of Hilbert
$C^*$-modules in an evident sense.
\end{dfn}

We have the following Cauchy-Schwartz inequality \cite{Pas1}
(see also \cite[Proposition 1.2.4]{MTBook}): for any $x,y\in\M$,
\begin{equation}\label{eq:cau_schw}
\<x,y\>\<y,x\>\le \|y\|^2 \<x,x\>.
\end{equation}

\begin{dfn}\label{dfn:standard_hm}
The \emph{standard} Hilbert $C^*$-module $\ell^2(\A)$
(also denoted by $H_\A$) is the set of all infinite sequences
$a=(a_1,a_2,\dots)$, $a_i\in\A$, such that
the series $\sum_i (a_i)^* a_i$ is norm-convergent in $\A$.
It is equipped with the inner product $\<a,b\>=
\sum_i (a_i)^*b_i$, where $b=(b_1,b_2,\dots)$.

If $\A$ is unital, then $\ell^2(\A)$ is countably
generated.
\end{dfn}

One of the most nice properties of countably generated
modules is the following theorem \cite{Kasp} (see
\cite[Theorem 1.4.2]{MTBook}). We mean that an isomorphism
preserves the $C^*$-Hilbert structure.

\begin{teo}[Kasparov stabilization theorem]\label{teo:Kasp_st}
For any countably generated Hilbert $C^*$-module $\M$
over any algebra $\A$,
there exists an isomorphism of Hilbert $C^*$-modules
$\M\oplus \ell^2(\A)\cong \ell^2(\A)$.  
\end{teo}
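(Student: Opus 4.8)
The plan is to realize the isomorphism as the unitary part of the polar decomposition of a carefully weighted adjointable operator, following the classical strategy. First I would fix a generating sequence: since $\M$ is countably generated, choose $(\xi_n)_{n\ge1}$ in $\M$ whose $\A$-linear combinations are dense, normalize so that $\|\xi_n\|\le1$, and --- crucially --- arrange that every generator occurs infinitely often in the list (just repeat it). I would then define an adjointable operator $T\colon H_\A\to\M\oplus H_\A$ by
\[
T\big((a_n)_n\big)=\Big(\textstyle\sum_n 2^{-n}\xi_n a_n\Big)\oplus\big(2^{-n}a_n\big)_n ,
\]
the weights $2^{-n}$ guaranteeing norm-convergence of the first coordinate (via $\|\xi_n\|\le1$ and \eqref{eq:cau_schw}) and adjointability, with
\[
T^*\big(\eta\oplus(b_n)_n\big)=\big(2^{-n}\<\xi_n,\eta\>+2^{-n}b_n\big)_n .
\]

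Next I would record the easy structural facts. The second coordinate of $T$ is the injective diagonal operator $\diag(2^{-n})$, so $T$ is injective. The heart of the argument is the opposite statement: that $T$ has dense range, and this is exactly where the infinite repetition of the generators enters. If $\eta\oplus(b_n)\perp\Im T$, then $T^*(\eta\oplus(b_n))=0$ forces $b_n=-\<\xi_n,\eta\>$; since $(b_n)\in H_\A$ the series $\sum_n\<\eta,\xi_n\>\<\xi_n,\eta\>$ must converge, but each value $\<\xi_k,\eta\>$ recurs infinitely often, so convergence forces $\<\xi_k,\eta\>=0$ for every generator, hence $\<\zeta,\eta\>=0$ for all $\zeta\in\M$ and thus $\eta=0$, whence $b_n=0$. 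I would combine this vanishing-of-the-orthogonal-complement computation with a direct approximation argument --- approximating the $H_\A$-coordinate first and then correcting the $\M$-coordinate by the generators --- to conclude $\ov{\Im T}=\M\oplus H_\A$.

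Finally I would pass to the polar decomposition. Granting the above, both $T$ and $T^*$ are injective with dense range, so by the standard range--kernel relations for adjointable operators $T^*T$, and hence $|T|=(T^*T)^{1/2}$, are injective with dense range in $H_\A$. Define $V$ on the dense submodule $\Im|T|$ by $V(|T|x)=Tx$; the identity $\<V|T|x,V|T|y\>=\<Tx,Ty\>=\<x,T^*Ty\>=\<|T|x,|T|y\>$ shows that $V$ preserves the $\A$-inner product, so it extends to an inner-product-preserving surjection $H_\A=\ov{\Im|T|}\to\ov{\Im T}=\M\oplus H_\A$. Such a map is a unitary isomorphism of Hilbert $C^*$-modules, which is the assertion.

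I expect the main obstacle to lie in the density step together with making the polar decomposition honest: in Hilbert $C^*$-modules a partial isometry from polar decomposition need not be adjointable and trivial kernel need not yield dense range, so the weights $2^{-n}$ and the infinite repetition of the generators must be invoked precisely to force $|T|$ (equivalently $T^*T$) to have dense range and to make the isometry $V$ extend to a genuine isomorphism.
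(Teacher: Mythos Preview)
The paper does not prove this theorem; it is quoted as background with references to \cite{Kasp} and \cite[Theorem~1.4.2]{MTBook}. Your outline is exactly the classical argument: build a weighted adjointable map $T\colon H_\A\to\M\oplus H_\A$ from a repeated generating sequence, show that $T$ and $|T|$ have dense range, and read off the unitary from $T=V|T|$.

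There is one concrete slip. With \emph{equal} weights $2^{-n}$ on both coordinates, the step ``correct the $\M$-coordinate by the generators'' does not work as written: placing $a_m=2^{m}c$ at a far-out position $m$ with $\xi_m=\xi_k$ contributes $\xi_k c$ to the first coordinate but $2^{-m}a_m=c$ to the $m$-th slot of the second, and $\|c\|$ does not shrink as $m\to\infty$. The standard remedy --- and the reason the textbook proofs are asymmetric --- is to take, say, $2^{-n}$ on the $\M$-side and $4^{-n}$ on the $H_\A$-side, so that the same correction contributes only $4^{-m}\cdot 2^{m}c=2^{-m}c$ to the second coordinate and the infinite repetition of each generator lets you push this to $0$. (Alternatively one can average each correction $\xi_k c$ over $M$ repetitions of $\xi_k$ with coefficient $c/M$, forcing the $H_\A$-error down to $\|c\|/\sqrt{M}$, but that is not what your sketch describes.) A second small point: the ``standard range--kernel relations'' do \emph{not} give dense range of $T^*$ from $\Ker T=0$ in a Hilbert $C^*$-module; here you get it for free because the diagonal part of $T^*$ already hits all finitely supported sequences. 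Once both $T$ and $T^*$ have dense range, $\overline{\Im T^*T}\supseteq T^*(\overline{\Im T})=\Im T^*$ is dense, hence so is $\overline{\Im|T|}\supseteq\overline{\Im|T|^2}$, and your polar-decomposition step then goes through.
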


Now recall the main concepts and results of \cite{Troitsky2020JMAA}.

\begin{dfn}\label{dfn:admissyst}
Let $\cN$ be a Hilbert $C^*$-module over $\A$. A countable
system $X=\{x_{i}\}$ of its elements 
is called \emph{admissible} for a submodule $\cN_0\subseteq  \cN$
(or $\cN^0$-admissible) if
 for each $x\in\cN^0$
partial sums of the series $\sum_i \<x,x_i\>\<x_i,x\>$ 
are bounded by $\<x,x\>$ 
and the series is convergent. 
Also we require $\|x_i\|\le 1$ for any $i$.
\end{dfn}

\begin{ex}\label{ex:firstex}
For the standard module $\ell^2(\A)$ over a unital
algebra $\A$ one can take for $X$ the natural base $\{e_i\}$.
In the case of $\ell^2(\A)$ over a general algebra $\A$,
one can take $x_i$ having only the $i$-th component nontrivial
and of norm $\le 1$.
The other important example is $X$ with only finitely many non-zero
elements  and an appropriate normalization (this works for any module).
\end{ex}

Denote by $\F$ a countable collection $\{\f_1,\f_2,\dots\}$
of states on $\A$. For each pair $(X,\F)$ 
with an $\cN^0$-admissible $X$,
consider the following seminorms

\begin{equation}\label{eq:defnnorm}
\nu_{X,\F}(x)^2:=\sup_k 
\sum_{i=k}^\infty |\f_k\left(\<x,x_i\>\right)|^2,\quad x\in \cN^0. 
\end{equation}

and corresponding pseudo-metrics
\begin{equation}\label{eq:defnmetr}
d_{X,\F}(x,y)^2:=\sup_k 
\sum_{i=k}^\infty |\f_k\left(\<x-y,x_i\>\right)|^2,\quad x,y\in \cN^0. 
\end{equation}

In \cite{Troitsky2020JMAA} it is proved that these pseudo-metrics define a uniform structure
and the following definition is introduced.

\begin{dfn}\label{dfn:totbaundset}
A set $Y\subseteq \cN^0 \subseteq  \cN$ is \emph{totally bounded}
with respect to this uniform structure, if for any $(X,\F)$, 
where $X \subseteq  \cN$ is $\cN^0$-admissible,
and any 
$\e>0$ there exists a finite collection $y_1,\dots,y_n$
of elements of $Y$ such that the sets
$$
\left\{ y\in Y\,|\, d_{X,\F}(y_i,y)<\e\right\}
$$  
form a cover of $Y$. This finite collection is an 
$\e$\emph{-net in $Y$ for} $d_{X,\F}$.

If so, we will say briefly that $Y$ is $(\cN,\cN^0)$-\emph{totally bounded}.
\end{dfn}

The main result of \cite{Troitsky2020JMAA} is the following theorem.

\begin{teo}\label{teo:mainteoremold}
Suppose that $F:\M\to\cN$ is an adjointable operator and
$\cN$ is countably generated. Then $F$ is $\A$-compact
if and only if $F(B)$ is $(\cN,\cN)$-totally bounded,
where $B$ is the unit ball of $\M$.
\end{teo}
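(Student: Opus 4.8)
The plan is to prove both implications by reducing the problem to a concrete description of $\A$-compactness via ``rank-one'' operators and then matching that description against the total boundedness condition, one admissible system at a time.

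First I would recall the geometric meaning of $\A$-compactness: an adjointable operator $F$ is $\A$-compact if it lies in the norm closure of the span of the rank-one operators $\theta_{u,v}(x)=u\<v,x\>$. For the forward direction (necessity), I would first treat finite-rank operators. Given $F=\sum_{j=1}^m \theta_{u_j,v_j}$, I must show $F(B)$ is totally bounded for every $\cN^0$-admissible $X$ and every collection $\F$ of states. The key estimate is that for $x\in B$ the series $\sum_i|\f_k(\<Fx,x_i\>)|^2$ is controlled: expanding $\<Fx,x_i\>=\sum_j \<u_j,x_i\>\<v_j,x\>$, using admissibility of $X$ to bound the tail sums $\sum_i \<u_j,x_i\>\<x_i,u_j\>$ by $\<u_j,u_j\>$, and applying the Cauchy--Schwarz inequality \eqref{eq:cau_schw} together with Lemma \ref{lem:comparkvadr}, I expect to bound $\nu_{X,\F}(Fx)$ in terms of a finite sum over $j$ of quantities depending continuously on $\<v_j,x\>$. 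Since $x\mapsto(\<v_1,x\>,\dots,\<v_m,x\>)$ maps $B$ into a product of bounded subsets of $\A$, the image of $B$ under $F$ reduces to a finite-dimensional-type parametrization in which total boundedness for $d_{X,\F}$ follows from the uniform tail control. A density argument (approximating a general $\A$-compact $F$ in norm by finite-rank operators and using that the seminorms $\nu_{X,\F}$ are dominated by the operator norm, which I would verify as in \cite{Troitsky2020JMAA}) then passes total boundedness to the limit.

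For the converse (sufficiency), this is where the hypothesis that $\cN$ is countably generated becomes essential, and I expect it to be the main obstacle. The strategy is contrapositive: assume $F$ is not $\A$-compact and construct a single pair $(X,\F)$ witnessing that $F(B)$ fails to be totally bounded for $d_{X,\F}$. Since $\cN$ is countably generated, by the Kasparov stabilization theorem (Theorem \ref{teo:Kasp_st}) I may assume $\cN=\ell^2(\A)$, so that a canonical admissible system, namely a truncation of the standard basis as in Example \ref{ex:firstex}, is available and the coordinate functionals $\<\,\cdot\,,x_i\>$ acquire a transparent meaning. Failure of $\A$-compactness should manifest as a sequence $x^{(n)}\in B$ whose images $Fx^{(n)}$ stay ``spread out'' in a way no finite rank approximation can capture; I would select states $\f_k$ via Lemma \ref{lem:estimfornonpositive} to detect the norm of the relevant coordinates $\<Fx^{(n)},x_i\>$, and choose the admissible system $X$ so that $d_{X,\F}(Fx^{(n)},Fx^{(m)})$ stays bounded below for $n\ne m$, contradicting the existence of a finite $\e$-net.

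The main difficulty is organizing this diagonal construction so that a \emph{single} admissible $X$ and a \emph{single} $\F$ work simultaneously for the whole separated sequence, rather than one pair per index; this requires carefully interleaving the supports (the indices $i$ where each $Fx^{(n)}$ is concentrated) and the states, exploiting the $\sup_k$ and the tail-sum structure in \eqref{eq:defnmetr}. I would also need the uniform approximation supplied by Lemma \ref{uniform} to handle non-unital $\A$, replacing the identity by the elements $g_n$ when passing between $\<Fx,x_i\>$ and its state values. Throughout, the admissibility bound $\sum_i\<x,x_i\>\<x_i,x\>\le\<x,x\>$ is what prevents the seminorms from blowing up and guarantees the uniform structure is well defined, so verifying that the constructed $X$ is genuinely $\cN$-admissible is a step I would not skip.
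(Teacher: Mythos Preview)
This theorem is not proved in the present paper at all: it is quoted verbatim as ``the main result of \cite{Troitsky2020JMAA}'' and is invoked only as input to Theorem~\ref{teo:mainteorem1}. There is therefore no proof in this paper to compare your proposal against.

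That said, your sketch is broadly in the spirit of how the result is established in \cite{Troitsky2020JMAA} and \cite{TroitFuf2020}: necessity is handled by approximating an $\A$-compact operator by finite-rank ones and using the domination $d_{X,\F}(x,y)\le\|x-y\|$, while sufficiency uses Kasparov stabilization to pass to $\ell^2(\A)$ and then builds a witness pair $(X,\F)$. One caution: in the necessity direction your plan to reduce to the map $x\mapsto(\<v_1,x\>,\dots,\<v_m,x\>)\in\A^m$ is not by itself enough, since bounded subsets of $\A^m$ need not be totally bounded for the induced seminorms; the actual argument exploits the tail structure of the admissible system (the convergence of $\sum_i\<u_j,x_i\>\<x_i,u_j\>$) to get uniform smallness of tails on $F(B)$, and then handles the remaining \emph{finite} block separately. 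Your description gestures at this but does not quite isolate it as the crux.
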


In the present paper we consider a natural generalization of that uniform structure,

\begin{dfn}\label{dfn:admissyst*}
Let $\cN$ be a Hilbert $C^*$-module over $\A$. A countable
system $F=\{f_{i}\}$ of elements of the dual module $\cN'$ (i.e. $\A$-linear maps $\cN\to \A$) 
is called \emph{$*$-admissible} for a submodule $\cN_0\subseteq  \cN$
(or $*$-$\cN^0$-admissible) if
\begin{enumerate}
\item[1)]  for each $x\in\cN^0$ partial sums of the series $\sum_i (f_i(x))^* f_i(x)$ 
are bounded by $\<x,x\>$ ;
\item[2)] this series is norm convergent; 
\item[3)]  $\|f_i\|\le 1$ for any $i$. 
\end{enumerate}
\end{dfn}

\begin{lem}\label{12then3} 
If $\cN^0=\cN$, then condition 3) follows from conditions 1) and 2).
\end{lem}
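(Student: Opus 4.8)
The plan is to reduce condition 3) to the pointwise estimate $\norm{f_i(x)}\le\norm{x}$ for every $x\in\cN$; by the very definition of the norm on the dual module $\cN'$ as $\norm{f_i}=\sup_{\norm{x}\le 1}\norm{f_i(x)}$, this immediately yields $\norm{f_i}\le 1$. The key observation is that condition 1) already encodes such an estimate once one isolates a single term of the series.

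First I would fix an index $i$ and an arbitrary $x\in\cN=\cN^0$. Since every summand $(f_j(x))^*f_j(x)$ is positive, the $i$-th term is dominated by the partial sum $\sum_{j=1}^{i}(f_j(x))^*f_j(x)$, which by condition 1) is in turn dominated by $\<x,x\>$; hence $0\le (f_i(x))^*f_i(x)\le\<x,x\>$ in $\A$. Next I would apply the $C^*$-identity together with the monotonicity of the norm on positive elements to get $\norm{f_i(x)}^2=\norm{(f_i(x))^*f_i(x)}\le\norm{\<x,x\>}=\norm{x}^2$. Taking the supremum over the unit ball of $\cN$ gives $\norm{f_i}\le 1$. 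Note that condition 2) is not actually needed for this argument; only condition 1) is used.

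The role of the hypothesis $\cN^0=\cN$ is essential and is where the only real subtlety lies: the norm of $f_i$ as an element of $\cN'$ is the supremum of $\norm{f_i(x)}$ over all $x$ in the \emph{whole} module $\cN$, whereas condition 1) a priori provides the domination $(f_i(x))^*f_i(x)\le\<x,x\>$ only for $x\in\cN^0$. When $\cN^0=\cN$ these two ranges coincide, so the estimate propagates to the full supremum defining $\norm{f_i}$. The main (mild) obstacle is thus purely order-theoretic bookkeeping: justifying that an individual positive summand is bounded by the partial sum and hence by $\<x,x\>$, and that the $C^*$-norm is monotone on positive elements; both are standard facts about $C^*$-algebras.
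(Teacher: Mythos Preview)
Your proof is correct and follows essentially the same route as the paper's: isolate a single summand $(f_i(x))^*f_i(x)$, bound it by $\<x,x\>$ via condition~1), pass to norms using the $C^*$-identity, and then take the supremum over the unit ball of $\cN=\cN^0$. Your additional remarks that condition~2) is not actually used and that the hypothesis $\cN^0=\cN$ is what makes the supremum range match are accurate and worth keeping.
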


\begin{proof}
For any $i\in\N$ we have $(f_i(x))^* f_i(x)\le\sum_i (f_i(x))^* f_i(x)\le\<x,x\>$ for any $x\in\cN$. Hence $||(f_i(x))^* f_i(x)||\le||\<x,x\>||$, i.e. $||f_i(x)||^2\le||x||^2$ and $||f_i(x)||\le||x||$, so $||f_i||=\sup\limits_{||x||\le1}||f_i(x)||\le||x||=1$.
\end{proof}

\begin{rk}
If $\cN^0\ne\cN$ this is not true in general even for $\cN^0$-admissible case. Indeed, for any (non-trivial) modules $\mathcal Z_1$ and $\mathcal Z_2$ take $\cN=\cZ_1\oplus\cZ_2$, $\cN^0=\cZ_1\oplus\{0\}\subset\cZ_1\oplus\cZ_2$ and $X=\{x_1,0,\dots\}$, where $x_1=(0,w)$, $||w||>1$. Then for any $x=(z,0)\in\cN^0$ we have that $\<x,x_1\>=0$ so $X$ is $\cN^0$-admissible, but $||x_1||>1$.

It turns out that
this question is closely related to the following problem attracted attention recently.
Suppose that $\cN_0 \ss \cN$ is a Hilbert $C^*$-submodule such that its orthogonal complement is trivial:
$(\cN_0)^\bot_\cN=\{0\}$. Is it true that the norm of $x\in \cN$ is equal to its norm as an $\A$-functional on
$\cN_0$, i.e. $\sup \{|\<x,y\>| \colon y\in \cN_0, \: \|y\|\le 1\}$? The answer generally is ``no'' (see \cite{KaadSkeide2023,Manuilov2023MathNachr})
but in some cases, for example for $\A$ being a commutative von Neumann algebra, the answer is ``yes''
\cite{Manuilov2023MathNachr}.
\end{rk}

Denote by $\F$ a countable collection $\{\f_1,\f_2,\dots\}$
of states on $\A$. For each pair $(F,\F)$ 
with a $*$-$\cN^0$-admissible $F$,
consider the following seminorms

\begin{equation}\label{eq:defnnorm*}
\nu_{F,\F}(x)^2:=\sup_k 
\sum_{i=k}^\infty |\f_k\left(f_i (x)\right)|^2,\quad x\in \cN^0. 
\end{equation}

and corresponding pseudo-metrics
\begin{equation}\label{eq:defnmetr*}
d_{F,\F}(x,y)^2:=\sup_k 
\sum_{i=k}^\infty |\f_k\left(f_i (x-y)\right)|^2,\quad x,y\in \cN^0. 
\end{equation}

Let us observe that this is indeed a generalization of the seminorms $\nu_{X,\F}$ defined by (\ref{eq:defnnorm})
since $\cN$ is included in $\cN'$ by formula $\widehat{x}(z)=\<x,z\>$ and since $\<x,z\>=\<z,x\>^*$ and $\f(a^*)=\overline{\f(a)}$ for any $a\in\A$ and any state $\f$ on $\A$, so $|\f(\<x,x_i\>)|=|\f(\<x_i,x\>^*)|=|\f(\<x_i,x\>)|=|\f(\widehat{x}_i(x))|$, where $\widehat{x}_i\in\cN$ (note that in \cite{ManTroit2023} the inclusion is $\cN\to\cN'$ described by another order in the $\A$-inner product but this difference in not significant).

Note that  $\widehat{xa}(z)=\<xa,z\>=a^*\<x,z\>$. This corresponds to the structure of a right $\A$-module on $\cN'$, namely, $fa(z)=a^*f(z)$.

First, remark that this is a finite non-negative number.
Indeed, by Lemma \ref{lem:comparkvadr}
\begin{multline*}
\sum_{i=k}^s|\f_k\left(f_i (x-y)\right)|^2  \le 
  \f_k\left(\sum_{i=k}^s (f_i (x-y))^* f_i (x-y)\right) \\
\le \left\|\sum_{i=k}^s  (f_i (x-y))^* f_i (x-y) \right\|  
\le  \| \<x-y,x-y\>\| =  \|x-y\|^2. 
\end{multline*}
Since in (\ref{eq:defnmetr*}) we have a series of non-negative
numbers, this estimation implies its convergence and the estimation
\begin{equation}\label{eq:sravnsobych}
d_{F,\F}(x,y)\le  \|x-y\|.
\end{equation}

As it was noted in \cite[Proposition 2.8]{Troitsky2020JMAA}, 
for $x\ne y$ there exists $(X,\F)$ such that 
$d_{X,\F}(x,y)> \frac{1}{2} \|x-y\|$ so this uniform structure is Hausdorff.

Let us verify the triangle inequality:
\begin{equation}\label{eq:triangle_inequa_1}
\nu_{F,\F}(z+x)\le \nu_{F,\F}(z) + \nu_{F,\F}(x).
\end{equation}
Take an arbitrary $\e>0$ and choose $k$ and $m$ such that
\begin{equation}\label{eq:triangle_inequa_3}
\nu_{F,\F}(z+x)<\sqrt{\sum_{i=k}^m |\f_k (f_i(z+x))|^2}
+\e. 
\end{equation}
We have
\begin{equation}\label{eq:triangle_inequa_4}
\sqrt{\sum_{i=k}^m |\f_k (f_i(z+x))|^2}\le 
\sqrt{\sum_{i=k}^m (|\f_k (f_i(z))|+|\f_k (f_i(x)|)^2}
\end{equation}
By the triangle inequality for 
the standard norm in $\C^{m-k+1}$ we have
\begin{multline*}
\sqrt{\sum_{i=k}^m (|\f_k (f_i(z))|+|\f_k (f_i(x)|)^2}\le
\sqrt{\sum_{i=k}^m |\f_k (f_i(z))|^2}
+\sqrt{\sum_{i=k}^m |\f_k (f_i(x))|^2}\\
\le 
\nu_{F,\F}(z) + \nu_{F,\F}(x).
\end{multline*}
 Since $\e$ in (\ref{eq:triangle_inequa_3})  is arbitrary, together with (\ref{eq:triangle_inequa_4}) 
the last estimation gives (\ref{eq:triangle_inequa_1}).
So, we have verified that $d_{F,\F}$ satisfy the
conditions for
seminorms and
now can define a uniform structure on the unit ball of
$\cN^0$. 

\begin{dfn}\label{dfn:totbaundset*}
A set $Y\subseteq \cN^0 \subseteq  \cN$ is \emph{totally bounded}
with respect to this uniform structure if for any $(F,\F)$, 
where $F \subseteq  \cN'$ is $*$-$\cN^0$-admissible,
and any 
$\e>0$ there exists a finite collection $y_1,\dots,y_n$
of elements of $Y$ such that the sets
$$
\left\{ y\in Y\,|\, d_{F,\F}(y_i,y)<\e\right\}
$$  
form a cover of $Y$. This finite collection is an 
$\e$\emph{-net in $Y$ for} $d_{F,\F}$.

If so, we will say briefly that $Y$ is $(\cN,\cN^0)^*$-\emph{totally bounded}.

We will consider several variants of the definition.  If we consider only adjointable functionals $f_i$
we will say that $Y$ is $(\cN,\cN^0)^*_{ad}$-\emph{totally bounded}.
The corresponding uniform structure we will call \emph{adjointable}.

We will call a functional $f:\cN \to \A$ \emph{locally adjointable,} if for any adjointable $\A$-linear operator
$g: \A \to \cN$, the composition $f\circ g: \A \to \A$ is an adjointable functional. 
Of course, any adjointable functional is locally adjointable.
If we consider only locally adjointable functionals $f_i$
we will say that $Y$ is $(\cN,\cN^0)^*_{lad}$-\emph{totally bounded}.
The corresponding uniform structure we will call \emph{locally adjointable}.
\end{dfn}

Another special case of a functional is one given by a multiplier.
For any $C^*$-algebra $\A$ one can define the $C^*$-algebra of \emph{multipliers} $M(\A)$ (see \cite[\S3.12]{Pedersen} for details).
Then, for any Hilbert $\A$-module $\cN$ there exists a Hilbert $M(\A)$-module $M(\cN)$ (which is called the \emph{multiplier module} of $\cN$) containing $\cN$ as an ideal submodule associated with $\A$, i.e. $\cN=M(\cN)\A $ (see \cite{BakGul2004} for more details). 
Any (modular) \emph{multiplier} $m\in M(\cN)$ represents an $\A$-functional $\widehat{m}\circ j_M$ on $\cN$ by the formula $\widehat{m}(x)=\<m,x\>$. This functional is adjointable and its adjoint is given by the formula $\widehat{m}^*(a)=ma$. In fact this map gives rise to an identification of $M(\cN)$ and the module $\cN^*$ of
adjointable functionals on $\cN$ (see, \cite{BakGul2004,Bak2019}).
So, if in the definition of the uniform structure we consider only multipliers (if so, corresponding uniform structure we will call \emph{outer} (in the spirit of \cite{AramBa}) and in this case we will say that $Y$ is $(\cN,\cN^0)^*_{out}$-\emph{totally bounded}), this uniform structure coincide with $(\cN,\cN^0)^*_{ad}$-uniform structure.
In these terms it was previously introduced in \cite{Fuf2021faa}.

If $f\in\cN'$ is represented by some element of multiplier module we will denote it also as $f\in M(\cN)$ if it does not cause a confusion.

\begin{lem}\label{lem:directsum_totbu}
Suppose, a set $Y\subseteq  \cN=\cN_1\oplus \cN_2$ 
is $(\cN,\cN^0)^*$-totally bounded.
Then
$p_1 Y$ and $p_2 Y$ are 
$(\cN_1,\cN^0_1)^*$- and $(\cN_2,\cN^0_2)^*$-totally bounded, 
respectively, where 
$p_1: \cN_1\oplus \cN_2 \to \cN_1$,
$p_2: \cN_1\oplus \cN_2 \to \cN_2$ are the orthogonal projections, $\cN^0_1=p_1 \cN^0$ and $\cN^0_2=p_2 \cN^0$.

Conversely, if $p_1 Y$ and $p_2 Y$ are respectively
$(\cN_1,\cN^0_1)^*$- and $(\cN_2,\cN^0_2)^*$-totally bounded
for some submodules $\cN^0_1$ and $\cN^0_2$, 
respectively, then $Y$ is $(\cN,\cN^0_1\oplus\cN^0_2)^*$-totally
bounded. 

This is similarly true for $(\cN,\cN^0)^*_{ad}$ and $(\cN,\cN^0)^*_{lad}$-totally boundedness.
\end{lem}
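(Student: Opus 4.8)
The plan is to prove both implications by transporting $*$-admissible systems between $\cN$ and its summands along the projections $p_1,p_2$ and the adjointable inclusions $\iota_1\colon\cN_1\to\cN$, $\iota_2\colon\cN_2\to\cN$, and then to compare the resulting pseudo-metrics. Two facts drive everything: for $x=p_1x+p_2x$ the $\A$-linearity of a functional gives $f_i(x)=f_i(p_1x)+f_i(p_2x)$, and orthogonality of the summands gives $\langle p_1x,p_1x\rangle\le\langle x,x\rangle$ (and the equality of inner products on a summand with their restriction from $\cN$).

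For the forward direction I would start with a $*$-$\cN^0_1$-admissible system $G=\{g_i\}$ on $\cN_1$ and states $\F$, and lift it to $\cN$ by $f_i:=g_i\circ p_1$. Condition 1) for $F=\{f_i\}$ on $\cN^0$ is immediate, since for $x\in\cN^0$ we have $p_1x\in p_1\cN^0=\cN^0_1$ and hence $\sum_i(f_i(x))^*f_i(x)=\sum_i(g_i(p_1x))^*g_i(p_1x)\le\langle p_1x,p_1x\rangle\le\langle x,x\rangle$; norm convergence 2) is the same series and $\|f_i\|\le\|g_i\|\,\|p_1\|\le1$ gives 3). Since $f_i(x-y)=g_i(p_1(x-y))$, one reads off $d_{F,\F}(x,y)=d_{G,\F}(p_1x,p_1y)$, so any finite $\e$-net $\{y_1,\dots,y_n\}\subseteq Y$ for $d_{F,\F}$ (which exists by hypothesis) projects to a finite $\e$-net $\{p_1y_1,\dots,p_1y_n\}\subseteq p_1Y$ for $d_{G,\F}$. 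This shows $p_1Y$ is $(\cN_1,\cN^0_1)^*$-totally bounded, and symmetrically for $p_2Y$.

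For the converse, given a $*$-$(\cN^0_1\oplus\cN^0_2)$-admissible $F=\{f_i\}$ on $\cN$ and states $\F$, I would restrict to $g_i:=f_i\circ\iota_1$ and $h_i:=f_i\circ\iota_2$. These are $*$-$\cN^0_1$- resp. $*$-$\cN^0_2$-admissible: for $x_1\in\cN^0_1$ the element $x_1=x_1+0$ lies in $\cN^0_1\oplus\cN^0_2$, so admissibility of $F$ yields $\sum_i(g_i(x_1))^*g_i(x_1)\le\langle x_1,x_1\rangle$, while the norm and convergence conditions are inherited. The key estimate, obtained from $f_i(x-y)=g_i(p_1(x-y))+h_i(p_2(x-y))$ by the triangle inequality for $\f_k$, Minkowski's inequality in $\ell^2$ applied in the index $i$, and subadditivity of $\sup_k$, is
\[
d_{F,\F}(x,y)\le d_{G,\F}(p_1x,p_1y)+d_{H,\F}(p_2x,p_2y).
\]

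The step I expect to be the main obstacle is assembling a \emph{single} net in $Y$ out of the two nets for $p_1Y$ and $p_2Y$: one cannot simply add chosen net elements, as $p_1y'+p_2y''$ need not lie in $Y$. I would resolve this by a grid argument. Fixing $\e$ and working with the parameter $\e/4$, choose a finite net $\{p_1y'_j\}_{j\le m}$ for $d_{G,\F}$ in $p_1Y$ and $\{p_2y''_s\}_{s\le l}$ for $d_{H,\F}$ in $p_2Y$; for each index pair $(j,s)$ that is actually realized by some element of $Y$ (meaning some $y\in Y$ has $d_{G,\F}(p_1y'_j,p_1y)<\e/4$ and $d_{H,\F}(p_2y''_s,p_2y)<\e/4$) fix one representative $z_{j,s}\in Y$. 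Given any $y\in Y$, picking the $j,s$ witnessing it and applying the triangle inequality in each factor gives $d_{G,\F}(p_1z_{j,s},p_1y)<\e/2$ and $d_{H,\F}(p_2z_{j,s},p_2y)<\e/2$, whence $d_{F,\F}(z_{j,s},y)<\e$ by the displayed estimate; the finite family $\{z_{j,s}\}\subseteq Y$ is then the required $\e$-net. Finally, the adjointable and locally adjointable variants follow verbatim once one observes that $p_1,p_2,\iota_1,\iota_2$ are adjointable: then $g_i\circ p_1$ (resp. $f_i\circ\iota_1$) is adjointable whenever $g_i$ (resp. $f_i$) is, and likewise locally adjointable, since precomposing an adjointable $\A\to\cN_1$ with $\iota_1$ yields an adjointable map $\A\to\cN$.
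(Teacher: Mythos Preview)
Your proof is correct and follows essentially the same approach as the paper: lift/restrict $*$-admissible systems along the projections $p_j$ and inclusions $\iota_j=p_j^*$, identify the resulting pseudo-metrics, and for the converse combine $\e/4$-nets in $p_1Y$ and $p_2Y$ via a grid argument with representatives chosen from $Y$. Your version of the grid step (picking one $z_{j,s}\in Y$ for each realized index pair) is a slightly more direct phrasing of the paper's two-stage construction (first an $\e/2$-net $\{J_1z_k+J_2w_s\}$ in $p_1Y\oplus p_2Y$, then representatives $u^*_l\in Y$), and your explicit remark that adjointability/local adjointability is preserved under composition with the adjointable maps $p_j,\iota_j$ fills in a point the paper leaves implicit.
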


\begin{proof}
Denote by $J_j=p_j^*$ the corresponding inclusions
$J_j:\cN_j\hookrightarrow \cN_1\oplus \cN_2$, $j=1,2$.

Also we introduce the map  $p_j':\cN_j'\to (\cN_1\oplus \cN_2)'$ defined by the formula $p_j'(f)(x)=f(p_j(x))$, and the map $J_j': (\cN_j\oplus \cN_2)' \to \cN_j'$, $J_j'(f)(x)=f(J_j(x))$.

Suppose, $Y$  is $(\cN,\cN^0)^*$-totally bounded and $F=\{f_i\}$ is 
a $*$-admissible system for a 
submodule
$\cN^0_1\subseteq  \cN_1$. Then $p_1'F=\{p_1'(f_i)\}$ is admissible for $\cN^0$ because
$$
(p_1'(f_i)(x))^*p_1'(f_i)(x)=(f_i(p_1(x)))^*f_i(p_1(x)).
$$
Let $y_1,\dots,y_s$ be an $\e$-net in $Y$ for $d_{p_1'F,\F}$.
Then $p_1 y_1,\dots,p_1 y_s$ is an $\e$-net in $p_1 Y$ for 
$d_{F,\F}$.
Indeed, consider an arbitrary $z\in p_1 Y$. Then $z=p_1 y$ for
some $y\in Y$. Find $y_k$ such that $d_{p_1'F,\F}(y,y_k)<\e$.
Then
\begin{multline*}
d_{F,\F}^2(z,p_1 y_k)=\sup_k 
\sum_{i=k}^\infty |\f_k\left(f_i(z-p_1 y_k)\right)|^2 \\
= \sup_k
\sum_{i=k}^\infty |\f_k\left(f_i(p_1(y- y_k))\right)|^2
\\
=
\sup_k \sum_{i=k}^\infty |\f_k\left((J_1'f_i)(y- y_k)\right)|^2=
d_{p_1' F,\F}^2(y,y_k)<\e^2.
\end{multline*}
Similarly for $j=2$.

Conversely, suppose that $p_j Y$ are 
$(\cN_j,\cN_j^0)^*$-totally bounded, $j=1,2$.
Let $F=\{f_i\}$ be an admissible system in $\cN'$ for 
$\cN^0_1\oplus \cN^0_2$ and $\e>0$ is arbitrary.
Then $F_j:=\{J_j'(f_i)\}$ is an admissible system in $\cN_j'$
for $\cN^0_j$. Indeed, this follows from
$$
(J'_j (f_i)(x))^*J'_j (f_i)(x)=  (f_i(J_j(x)))^*f_i(J_j(x)).
$$
For $u, v\in p_j Y$, we have
\begin{equation}\label{eq:projprojproj}
d_{F,\F} (J_j  u, J_j v) = 
d_{F_j,\F} ( u, v).
\end{equation}
Indeed, we obtain the convergence and can estimate the sum
using (as above) the equality
$$
\sum_{i=1}^s (f_i(J_j(u-v)))^*f_i(J_j(u-v))
= \sum_{i=1}^s ((J_j'f_i)(u-v))^*(J_j'f_i)(u-v)
$$
and, quite similarly, (\ref{eq:projprojproj}) follows from the equality
$$
f_i(J_j  u- J_j v)= (J_j'f_i)(u- v),\qquad j=1,2.
$$

Suppose that $z_1,\dots,z_m$ is an $\e/4$-net 
in $p_1 Y$ for $d_{F_1,\F}$ and
$w_1,\dots,w_r$ is an $\e/4$-net 
in $p_2 Y$ for $d_{F_2,\F}$. 
Consider $\{z_k+w_s\}$, $k=1,\dots,m$,
$s=1,\dots,r$. Then $\{J_1 z_k+ J_2 w_s\}$ is an
$\e/2$-net in $p_1 Y \oplus p_2 Y$
 for $d_{F,\F}$.
Indeed, for any $J_1 p_1 y_1 +J_2 p_2 y_2$, $y_1,y_2\in Y$,
one can find $z_k$ and $w_s$
such that
$$
d_{F_1,\F}(p_1 y_1,z_k)<\e/4,\qquad
d_{F_2,\F}(p_2 y_2,w_s)<\e/4.
$$
Then by (\ref{eq:triangle_inequa_1}) and (\ref{eq:projprojproj})
\begin{multline*}
d_{F,\F}(J_1 p_1 y_1 +J_2 p_2 y_2, J_1 z_k+ J_2 w_s) \\
\le d_{F,\F}(J_1 p_1 y_1,J_1 z_k)+
d_{F,\F}(J_2 p_2 y_2, J_2 w_s) \\
=d_{F_1,\F}(p_1 y_1, z_k)+
d_{F_2,\F}(p_2 y_2,  w_s)<\e/2.
\end{multline*}
Now find a subset $\{u_l\}\subset \{z_k+w_s\}$
formed by all elements of $\{z_k+w_s\}$ such that there exists an
element $u^*\in Y\sse p_1 Y \oplus p_2 Y$ 
with $d_{F,\F}(u^*,z_k+w_s)<\e/2$.  
Denote these $u^*$ by $u^*_l$, $l=1,\dots,L$. So,
\begin{enumerate}[1)]
\item for any $y\in Y$, there exists $l\in 1,\dots, L$
such that $d_{F,\F}(y,u_l)<\e/2$;
\item for each $l\in 1,\dots, L$, we have $d_{F,\F}(u^*_l,u_l)<\e/2$.
\end{enumerate}
By the triangle inequality, $\{u^*_l\}$ is a finite $\e$-net
in $Y$ for $d_{F,\F}$ and we are done.
\end{proof}

\begin{rk}
From \cite[Lemma 2.15]{Troitsky2020JMAA} it follows that $\cN^0_1\oplus\cN^0_2$ is countably generated
if and only if $\cN^0_1$ and $\cN^0_2$ are countably generated.
\end{rk}

Evidently we have the following statements.
\begin{prop}
A functional $f:\A\to \A$ is locally adjointable if and only if it is adjointable.
\end{prop}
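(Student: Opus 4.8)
The plan is to treat the two implications asymmetrically, since the interesting content lies entirely in one direction. The implication ``adjointable $\Rightarrow$ locally adjointable'' is already recorded in Definition \ref{dfn:totbaundset*}: if $f$ is adjointable and $g:\A\to\cN$ is adjointable, then $f\circ g$ is a composition of adjointable operators and hence adjointable. So I would dispose of this direction in one sentence and focus on the converse.

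For the converse, the key observation is that here the target module is $\cN=\A$ itself, and in this situation the definition of local adjointability may be applied to a single, canonical choice of test operator. Concretely, I would take $g=\Id_\A:\A\to\A=\cN$. This $g$ is $\A$-linear, and it is adjointable with $g^*=\Id_\A$, since the inner product on $\A$ viewed as a module over itself satisfies $\<\Id_\A(a),b\>=\<a,b\>=\<a,\Id_\A(b)\>$ for all $a,b\in\A$. Thus $\Id_\A$ is a legitimate instance of the adjointable operators $g:\A\to\cN$ appearing in the definition of locally adjointable functionals.

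Now I would simply feed $g=\Id_\A$ into the hypothesis. If $f:\A\to\A$ is locally adjointable, then by definition the composition $f\circ g=f\circ\Id_\A=f$ is an adjointable functional. Hence $f$ itself is adjointable, which is exactly what we want.

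There is essentially no obstacle to overcome: the entire content is the recognition that when $\cN=\A$ the identity operator is both available and adjointable, so that the ``for any adjointable $g$'' quantifier in the definition of local adjointability can be specialized to the one choice $g=\Id_\A$, collapsing the condition immediately. This is why the statement is labeled ``evident.'' The only point requiring a word of justification is the adjointability of $\Id_\A$, which follows at once from the definition of the inner product on $\A$; everything else is formal.
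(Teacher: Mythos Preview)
Your proposal is correct and matches the paper's intent: the paper gives no proof at all, simply prefacing the proposition with ``Evidently we have the following statements,'' and your argument --- specializing the test operator to $g=\Id_\A$ --- is exactly the evident reason. Nothing more is needed.
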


\begin{prop}
If $LM(\A)=M(\A)$, hence $RM(\A)=M(\A)$, then any functional is locally adjointable.
\end{prop}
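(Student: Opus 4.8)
The plan is to unwind the definition of local adjointability and reduce everything to the characterization, recalled just before the statement, of adjointable functionals on $\A$ as exactly the multipliers. Fix an arbitrary functional $f\in\cN'$ and an arbitrary adjointable $\A$-linear operator $g:\A\to\cN$; I must show that $h:=f\circ g:\A\to\A$ is an adjointable functional. Since $g$ and $f$ are bounded and $\A$-linear, $h$ is a bounded map satisfying $h(ab)=f(g(ab))=f(g(a)b)=f(g(a))b=h(a)b$ for all $a,b\in\A$, so $h$ is a bounded right $\A$-linear endomorphism of $\A$.

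Next I would invoke the standard identification of the left multiplier algebra: $LM(\A)$ is precisely the algebra of bounded right $\A$-linear endomorphisms of $\A$ (viewed as a right Hilbert module over itself), each such map being left multiplication $a\mapsto ca$ by a unique $c\in LM(\A)$. By the previous paragraph $h$ is exactly a map of this kind, so $h\in LM(\A)$.

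Finally I would use the description from the paragraph preceding the statement: a functional on $\A$ is adjointable if and only if it is represented by a multiplier, i.e. has the form $\widehat m(x)=\<m,x\>=m^*x$ with $m\in M(\A)$. Equivalently, the left multiplication $a\mapsto ca$ is adjointable precisely when $c\in M(\A)$: its formal adjoint is $b\mapsto c^*b$, which lands in $\A$ for all $b$ exactly when $c^*\in LM(\A)$, i.e. $c\in RM(\A)$, so adjointability forces $c\in LM(\A)\cap RM(\A)=M(\A)$. Under the hypothesis $LM(\A)=M(\A)$ we therefore get $h\in LM(\A)=M(\A)$, hence $h$ is adjointable. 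As $g$ was arbitrary this shows $f$ is locally adjointable, and as $f$ was arbitrary every functional is locally adjointable. The auxiliary equivalence $LM(\A)=M(\A)\Leftrightarrow RM(\A)=M(\A)$ is the standard fact that the involution $L\mapsto\bigl(a\mapsto (L(a^*))^*\bigr)$ is an anti-isomorphism $LM(\A)\to RM(\A)$ restricting to the identity on $M(\A)$, so either equality implies the other.

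The only delicate points are bookkeeping rather than analytic: confirming that composition with the adjointable $g$ keeps us inside the class of bounded right $\A$-linear maps $\A\to\A$, and matching the left/right conventions so that ``functional on $\A$'' corresponds to an element of $LM(\A)$ while ``adjointable functional on $\A$'' corresponds to an element of $M(\A)$. Once these identifications are in place the conclusion is immediate, which is precisely why the statement is flagged as evident; there is no substantive obstacle beyond getting the conventions straight.
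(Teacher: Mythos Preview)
Your argument is correct and is exactly the unwinding of definitions that justifies the paper's claim; the paper itself gives no proof beyond the word ``Evidently'', so there is nothing further to compare. The identifications you spell out (right $\A$-linear endomorphisms of $\A$ with $LM(\A)$, adjointable functionals on $\A$ with $M(\A)$ via $a\mapsto m^*a$) are precisely the ones implicit in the paper's conventions.
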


This is fulfilled, in particular, for commutative and unital algebras.

Now we are able to formulate our main result.

\begin{teo}[Main Theorem]\label{teo:mainteorem}
Let $Y$ be a subset of $\cN^0\subset\cN$. 
Then $Y$ is $(\cN,\cN^0)^*_{lad}$-totally bounded
if and only if it is $(\cN,\cN^0)$-totally bounded in the following cases:
\begin{enumerate}
\item[\rm 1)] $\cN=\A$;
\item[\rm 2)] $\cN=\ell^2(\A)$;
\item[\rm 3)] $\cN$ is countably generated.
\end{enumerate}
\end{teo}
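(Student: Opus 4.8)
The plan is to prove the two implications separately. The direction $(\cN,\cN^0)^*_{lad}$-total boundedness $\Rightarrow(\cN,\cN^0)$-total boundedness is elementary and holds in full generality, so the whole content is in the converse, which I treat case by case. For the easy direction, recall that every element $x_i\in\cN$ induces the adjointable (hence locally adjointable) functional $\widehat{x_i}$, and, as observed right after Definition \ref{dfn:admissyst*}, an $\cN^0$-admissible system $X=\{x_i\}$ gives a $*$-$\cN^0$-admissible system $\{\widehat{x_i}\}$ of locally adjointable functionals with $\nu_{\{\widehat{x_i}\},\F}=\nu_{X,\F}$. Since the locally adjointable systems include all these $\{\widehat{x_i}\}$, a finite net for $d_{\{\widehat{x_i}\},\F}$ is literally a finite net for $d_{X,\F}$; hence $(\cN,\cN^0)^*_{lad}$-total boundedness forces $(\cN,\cN^0)$-total boundedness with no restriction on $\cN$. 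I also record that a totally bounded $Y$ is norm-bounded, say $\|y\|\le R$ on $Y$ (Lemma \ref{bounded}), which will be used throughout the converse.

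\textbf{Case 1).} Here a locally adjointable functional $f_i:\A\to\A$ is adjointable (Proposition above), hence given by a multiplier, $f_i(x)=m_i^*x$ with $m_i\in M(\A)$, $\|m_i\|\le1$. Fix an increasing approximate identity $\{e_\mu\}$. The key device is to approximate $f_i$ by the element functional attached to $x_i:=m_ie_{\mu_i}\in\A$, where the $\mu_i$ are chosen increasing and so that $e_{\mu_k}$ is good for the $k$-th state, i.e.\ $\f_k(\1-e_{\mu_k})\le\e^2$ (possible since $\f_k(e_\mu)\to1$). Then $X=\{x_i\}$ is $\cN^0$-admissible, because $e_{\mu_i}^2\le\1$ gives $\langle x,x_i\rangle\langle x_i,x\rangle\le (f_i(x))^*f_i(x)$ and $\|x_i\|\le1$. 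Letting the smoothing depend on the \emph{summation} index $i$ rather than on the state index $k$ is exactly what makes it uniform: in the $k$-th block of the seminorm only $i\ge k$ occur, where $\mu_i\ge\mu_k$, so applying Lemma \ref{lem:comparkvadr} to $(\1-e_{\mu_i})\,m_i^*x$ and using $\sum_i(f_i(x))^*f_i(x)\le\langle x,x\rangle$ yields, uniformly in $k$,
\[
\sum_{i\ge k}\bigl|\f_k\bigl((\1-e_{\mu_i})m_i^*x\bigr)\bigr|^2\le\Bigl(\sup_{i\ge k}\f_k\bigl((\1-e_{\mu_i})^2\bigr)\Bigr)\|x\|^2\le\e^2\|x\|^2 .
\]
By the triangle inequality this gives $|\nu_{F,\F}(x)-\nu_{X,\F}(x)|\le\e\|x\|$, hence $d_{F,\F}(x,y)\le d_{X,\F}(x,y)+2R\e$ on $Y$. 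Given a target $\e_0$, pick $\e$ with $2R\e<\e_0/2$, form the corresponding admissible $X$, and invoke the hypothesis for a finite $\e_0/2$-net of $d_{X,\F}$; the estimate makes it an $\e_0$-net for $d_{F,\F}$.

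\textbf{Case 2).} On $\ell^2(\A)$ locally adjointable functionals need not be adjointable, and this is where \cite{Bak2019} is needed. Composing $f_i$ with the adjointable embeddings $a\mapsto e_ja$ gives multipliers $b_{ij}\in M(\A)$ with $f_i(e_ja)=b_{ij}^*a$, so $f_i(x)=\sum_j b_{ij}^*x_j$; composing instead with the finite-rank projection $P_N$ onto the first $N$ coordinates shows each $f_i\circ P_N$ is an adjointable (multiplier) functional. The plan is to approximate $f_i$ first by the multiplier $f_i\circ P_{N_i}$ and then, exactly as in case 1), by the element functional of $z_i:=(b_{i1}e_{\mu_i},\dots,b_{iN_i}e_{\mu_i},0,\dots)\in\ell^2(\A)$, with $N_i$ and $\mu_i$ increasing. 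The smoothing term is controlled verbatim as in case 1); the genuinely new term is the truncation error, equal to $f_i(P_{>N_i}x)$. I expect this to be the main obstacle: with a common cut-off $N$ it is bounded by $\|P_{>N}x\|$, which is \emph{not} small uniformly on $Y$ (precisely the failure of norm-total-boundedness), so one must use a staircase cut-off $N_i\uparrow\infty$ together with the quantitative bounds of \cite{Bak2019} on the matrix $(b_{ij})$ (equivalently $\|B\|\le1$ for $B:x\mapsto(f_i(x))_i$) to force $\sup_k\sum_{i\ge k}|\f_k(f_i(P_{>N_i}x))|^2\le\e^2\|x\|^2$ for $N_i$ growing fast enough. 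Granting this, the argument closes as in case 1).

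\textbf{Case 3).} This is a formal reduction to case 2) via the Kasparov stabilization theorem \ref{teo:Kasp_st} and Lemma \ref{lem:directsum_totbu} (and its analogue for the old structure from \cite{Troitsky2020JMAA}). Using $\cN\oplus\ell^2(\A)\cong\ell^2(\A)$ and viewing $Y\subseteq\cN^0\subseteq\cN=\cN\oplus\{0\}$, the component $p_2Y=\{0\}$ is trivially totally bounded in either structure, so Lemma \ref{lem:directsum_totbu} shows $Y$ is $(\cN,\cN^0)^*_{lad}$-totally bounded iff it is $(\ell^2(\A),\cN^0)^*_{lad}$-totally bounded, and likewise $Y$ is $(\cN,\cN^0)$-totally bounded iff it is $(\ell^2(\A),\cN^0)$-totally bounded. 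Case 2), applied to the module $\ell^2(\A)$ with the arbitrary submodule $\cN^0$, identifies the two right-hand conditions, and chaining the equivalences gives case 3).
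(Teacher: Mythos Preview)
Your overall architecture---easy direction by inclusion of seminorms, then case-by-case for the hard direction, with case 3 reduced to case 2 via Kasparov stabilization and Lemma~\ref{lem:directsum_totbu}---matches the paper exactly. Case~1 is correct; your monotone approximate unit trick (choose $\mu_k$ increasing with $\f_k(\1-e_{\mu_k})\le\e^2$, then use $e_{\mu_i}\ge e_{\mu_k}$ for $i\ge k$ together with Cauchy--Schwarz for states) is a clean variant of the paper's argument, which instead chooses $g_{n(i)}$ for each $i$ so that the error is below $\e^2/2^i$ for the finitely many states $\f_1,\dots,\f_i$. Both work; yours is slightly more elegant but needs an increasing approximate unit, which is harmless.

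The genuine gap is in case~2, where you explicitly write ``Granting this'' for the truncation estimate $\sup_k\sum_{i\ge k}|\f_k(f_i(P_{>N_i}x))|^2\le\e^2\|x\|^2$. Your hint toward closing it---the global operator bound $\|B\|\le1$ for $B:x\mapsto(f_i(x))_i$---is not the right lever: the truncation $P_{>N_i}$ depends on $i$, so this is not a composition with a single operator, and the norm bound on $B$ gives nothing directly. What the paper actually uses is the \emph{per-functional} information from \cite{Bak2019}: for each fixed $i$ the partial sums $\sum_{s=1}^N f_{i,s}^*f_{i,s}$ are uniformly bounded in $M(\A)$, hence for each state $\f_k$ the series $\sum_s\f_k(f_{i,s}^*f_{i,s})$ converges and its tail goes to $0$. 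One then chooses $r(i)$ so large that $\sum_{s>r(i)}\f_k(f_{i,s}^*f_{i,s})<\e^2/(2^i\cdot 4d^2)$ for the finitely many states $k=1,\dots,i$, and the Cauchy--Schwarz inequality for the sesquilinear form $(a,b)\mapsto\f_k(\sum_s a_s^*b_s)$ gives
\[
\Bigl|\f_k\Bigl(\sum_{s>r(i)}f_{i,s}^*x_s\Bigr)\Bigr|^2\le\f_k\Bigl(\sum_{s>r(i)}f_{i,s}^*f_{i,s}\Bigr)\cdot\|x\|^2\le\frac{\e^2}{2^i\cdot4},
\]
which is summable in $i$ uniformly in $k$. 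No growth condition on $r(i)$ is needed; only that it be large enough for those finitely many states. With this in hand your smoothing step goes through as you say, and the paper finishes with $d_{F,\F}(x,0)\le 2\,d_{X,\F}(x,0)+\e$ (note the factor $2$, coming from the crude inequality $|a+b|^2\le2|a|^2+2|b|^2$; a $3\e$-net suffices).
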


\begin{cor}\label{cor:main1}
Suppose that $LM(\A)=M(\A)$ and $Y$ is a subset of $\cN^0\subset\cN$.
Then $Y$ is $(\cN,\cN^0)^*$-totally bounded
if and only if it is $(\cN,\cN^0)$-totally bounded
and if and only if it is $(\cN,\cN^0)^*_{ad}$-totally bounded
in the following cases:
\begin{enumerate}
\item[\rm 1)] $\cN=\A$;
\item[\rm 2)] $\cN=\ell^2(\A)$;
\item[\rm 3)] $\cN$ is countably generated.
\end{enumerate}
\end{cor}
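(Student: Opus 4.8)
The plan is to deduce the Corollary purely formally from the Main Theorem together with the second Proposition preceding it (the one asserting that $LM(\A)=M(\A)$ forces every functional to be locally adjointable), using only the evident monotonicity of total boundedness under enlargement of the admissible class of functionals. First I would record the four-term chain of implications
$$
(\cN,\cN^0)^*\text{-t.b.} \Rightarrow (\cN,\cN^0)^*_{lad}\text{-t.b.} \Rightarrow (\cN,\cN^0)^*_{ad}\text{-t.b.} \Rightarrow (\cN,\cN^0)\text{-t.b.}
$$
Each arrow holds because the relevant family of admissible systems shrinks from left to right: every adjointable functional is locally adjointable, every locally adjointable functional is a functional, and — via the embedding $\widehat{x}(z)=\<x,z\>$ already used just after Definition \ref{dfn:admissyst*} — each $\cN^0$-admissible system $X=\{x_i\}$ of module elements yields an adjointable $*$-$\cN^0$-admissible system $\{\widehat{x_i}\}$, since $(\widehat{x_i}(x))^*\widehat{x_i}(x)=\<x,x_i\>\<x_i,x\>$, $\|\widehat{x_i}\|=\|x_i\|\le 1$, and $|\f_k(\<x-y,x_i\>)|=|\f_k(\widehat{x_i}(x-y))|$, so that the pseudometrics coincide, $d_{X,\F}=d_{\{\widehat{x_i}\},\F}$. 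Consequently a finite $\e$-net for the stronger (larger-class) structure is automatically a finite $\e$-net for the weaker one, which is exactly what each implication asserts.

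Next I would invoke the hypothesis $LM(\A)=M(\A)$. By the cited Proposition, under this hypothesis every $\A$-functional is locally adjointable, so the class of locally adjointable functionals coincides with the class of all functionals, and the two corresponding uniform structures literally agree. Hence
$$
(\cN,\cN^0)^*\text{-t.b.} \Leftrightarrow (\cN,\cN^0)^*_{lad}\text{-t.b.}
$$
On the other hand, in each of the three cases $\cN=\A$, $\cN=\ell^2(\A)$, and $\cN$ countably generated, the Main Theorem (Theorem \ref{teo:mainteorem}) provides
$$
(\cN,\cN^0)^*_{lad}\text{-t.b.} \Leftrightarrow (\cN,\cN^0)\text{-t.b.}
$$
Composing these two equivalences gives $(\cN,\cN^0)^*$-t.b. $\Leftrightarrow (\cN,\cN^0)$-t.b. in all three cases.

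Finally I would dispatch the adjointable structure by a squeezing argument. The chain of the first paragraph places $(\cN,\cN^0)^*_{ad}$-total boundedness between the two outermost notions,
$$
(\cN,\cN^0)^*\text{-t.b.} \Rightarrow (\cN,\cN^0)^*_{ad}\text{-t.b.} \Rightarrow (\cN,\cN^0)\text{-t.b.},
$$
and since these outer notions have just been shown equivalent, the middle one is forced to be equivalent to both. This yields the full three-way equivalence claimed in the Corollary.

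I do not anticipate a genuine obstacle: the entire analytic content is carried by the Main Theorem and the Proposition, and what remains is bookkeeping. The only points requiring care are the \emph{direction} of the monotonicity implications — one must keep in mind that enlarging the admissible family makes total boundedness harder rather than easier — and the verification that $x\mapsto\widehat{x}$ sends old admissible systems to adjointable $*$-admissible systems with identical pseudometrics, which is precisely the compatibility already observed after Definition \ref{dfn:admissyst*}.
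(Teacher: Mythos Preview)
Your proposal is correct and matches the paper's intended argument: the Corollary is stated without proof precisely because it follows immediately from the preceding Proposition (under $LM(\A)=M(\A)$ every functional is locally adjointable) together with the Main Theorem, via the obvious monotonicity chain and squeezing that you spell out. The only thing you add beyond what the paper makes explicit is the careful verification that $X\mapsto\{\widehat{x_i}\}$ carries $\cN^0$-admissible systems to adjointable $*$-$\cN^0$-admissible ones with identical pseudometrics, which is exactly the compatibility already noted in the paper after Definition~\ref{dfn:admissyst*}.
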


From Theorem \ref{teo:mainteoremold} and Theorem \ref{teo:mainteorem} we immediately obtain

\begin{teo}\label{teo:mainteorem1}
Suppose, $F:\M\to\cN$ is an adjointable operator and
$\cN$ is countably generated. Then $F$ is $\A$-compact
if and only if $F(B)$ is $(\cN,\cN)^*_{lad}$-totally bounded,
where $B$ is the unit ball of $\M$.
\end{teo}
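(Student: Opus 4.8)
The plan is to obtain this statement with no fresh analytic input, simply by chaining the two principal equivalences already established earlier in the excerpt. Concretely, I would specialize the Main Theorem (Theorem~\ref{teo:mainteorem}) to the choice $\cN^0=\cN$ and $Y=F(B)$. Since $F\colon\M\to\cN$, we automatically have $F(B)\subseteq\cN$, so the hypothesis $Y\subseteq\cN^0\subseteq\cN$ of that theorem holds trivially, and the countable generation of $\cN$ is exactly what is assumed here.

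First I would invoke case~3) of Theorem~\ref{teo:mainteorem}: because $\cN$ is countably generated, the Main Theorem applies and yields
\[
F(B)\ \text{is}\ (\cN,\cN)^*_{lad}\text{-totally bounded}\iff F(B)\ \text{is}\ (\cN,\cN)\text{-totally bounded}.
\]
Next I would invoke Theorem~\ref{teo:mainteoremold}, whose hypotheses ($F$ adjointable, $\cN$ countably generated) coincide verbatim with ours; it gives
\[
F\ \text{is}\ \A\text{-compact}\iff F(B)\ \text{is}\ (\cN,\cN)\text{-totally bounded}.
\]
Combining the two displayed equivalences through the common middle term, namely the property that $F(B)$ is $(\cN,\cN)$-totally bounded, yields precisely the asserted equivalence between $\A$-compactness of $F$ and $(\cN,\cN)^*_{lad}$-total boundedness of $F(B)$.

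There is essentially no obstacle in this final deduction: all the real substance lives in establishing the Main Theorem (in particular case~3), which reduces the locally adjointable uniform structure to the original one via the Kasparov stabilization theorem and the description of functionals on $\ell^2(\A)$. The only point worth checking here is that the two cited theorems are being applied in compatible settings --- that is, that the notion ``$(\cN,\cN)$-totally bounded'' appearing as the conclusion of Theorem~\ref{teo:mainteoremold} is literally the one appearing in the Main Theorem. Both refer to Definition~\ref{dfn:totbaundset}, so the middle terms match on the nose and the chaining is legitimate.
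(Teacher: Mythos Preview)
Your proposal is correct and matches the paper's own treatment exactly: the paper states that the result follows immediately from Theorem~\ref{teo:mainteoremold} and Theorem~\ref{teo:mainteorem}, which is precisely the chaining you describe.
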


A similar 
statement based on Corollary \ref{cor:main1} can be obtained as well.

For outer systems we will be able to prove this statement (see Theorem \ref{teo:main_out} below) without the
countability restriction. This is not surprising because $\cN^*$ is rather close to $\cN$ (and coincides with it, e.g. for a unital $\A$).

The following fact is very useful.

\begin{lem}\label{new_adm}
Suppose that $F=\{f_{i}\}$ is a $*$-$\cN^0$-admissible system and $\{g_i\}_{i\in\N}$ is an arbitrary sequence of elements of $\A$ such that $||g_i||\le1$ for any $i$. Then $\{(f_{i}g_i)\}$ is also a $*$-$\cN^0$-admissible system.
Also, if $F$ is outer $\cN^0$-admissible system (i.e. all $f_i\in M(\cN)$), then $\{(f_{i}g_i)\}$ is $\cN^0$-admissible (i.e. $f_{i}g_i\in\cN$)

\end{lem}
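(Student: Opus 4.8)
The plan is to verify the three defining conditions of Definition~\ref{dfn:admissyst*} for the system $\{f_i g_i\}$ directly, exploiting the explicit right $\A$-module structure on $\cN'$ recorded above, namely $(f_i g_i)(x)=g_i^* f_i(x)$. Condition 3) is immediate: for any $x$ with $\|x\|\le1$ one has $\|(f_i g_i)(x)\|=\|g_i^* f_i(x)\|\le\|g_i\|\,\|f_i(x)\|\le\|f_i\|\le1$, using $\|g_i\|\le1$ and condition 3) for $F$; taking the supremum over such $x$ gives $\|f_i g_i\|\le1$.

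For conditions 1) and 2) the key computation will be
\[
((f_i g_i)(x))^*(f_i g_i)(x)=(f_i(x))^*\, g_i g_i^*\, f_i(x).
\]
Here $g_i g_i^*\ge0$ and $\|g_i g_i^*\|=\|g_i\|^2\le1$, so $1-g_i g_i^*\ge0$ in the unitization of $\A$, whence $(f_i(x))^*(1-g_i g_i^*)f_i(x)\ge0$, i.e.
\[
((f_i g_i)(x))^*(f_i g_i)(x)\le (f_i(x))^*f_i(x)
\]
termwise. Summing and invoking condition 1) for $F$, the partial sums of $\sum_i ((f_i g_i)(x))^*(f_i g_i)(x)$ are then dominated by those of $\sum_i (f_i(x))^*f_i(x)$, hence bounded by $\<x,x\>$, which is condition 1). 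For condition 2), the same termwise domination yields, for $m\le n$,
\[
0\le\sum_{i=m}^n((f_i g_i)(x))^*(f_i g_i)(x)\le\sum_{i=m}^n(f_i(x))^*f_i(x),
\]
so the norm of the left-hand tail is bounded by the norm of the right-hand tail; since $\sum_i(f_i(x))^*f_i(x)$ is norm-convergent (condition 2) for $F$), its tails are norm-Cauchy, and therefore so are the tails of the new series. I expect condition 2) to be the only genuinely delicate point, because a bounded increasing sequence of positive elements in a $C^*$-algebra need not converge in norm; the domination against the already convergent series $\sum_i(f_i(x))^*f_i(x)$ is precisely what circumvents this.

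For the final assertion I would argue as follows. Suppose $f_i=\widehat{m_i}$ for multipliers $m_i\in M(\cN)$, so $f_i(x)=\<m_i,x\>$. Using $\<m_i g_i,x\>=g_i^*\<m_i,x\>=g_i^* f_i(x)=(f_i g_i)(x)$, the functional $f_i g_i$ is represented by the multiplier $m_i g_i\in M(\cN)$. Since $\cN=M(\cN)\A$, the product $m_i g_i$ of $m_i\in M(\cN)$ and $g_i\in\A$ lies in $\cN$, so each $f_i g_i$ comes from an element of $\cN$. Finally, rewriting the two series above via $\<x,m_i g_i\>\<m_i g_i,x\>=((f_i g_i)(x))^*(f_i g_i)(x)$ shows that the conditions of Definition~\ref{dfn:admissyst} hold for $\{m_i g_i\}$, with $\|m_i g_i\|=\|f_i g_i\|\le1$ by the isometric identification of $M(\cN)$ with $\cN^*$; hence $\{f_i g_i\}$ is $\cN^0$-admissible.
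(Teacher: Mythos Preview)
Your proof is correct and follows essentially the same approach as the paper: the key step in both is the termwise inequality $((f_i g_i)(x))^*(f_i g_i)(x)=(f_i(x))^*g_ig_i^*f_i(x)\le(f_i(x))^*f_i(x)$, from which conditions 1)--3) are deduced. You supply more detail than the paper does---in particular the Cauchy-tail argument for norm convergence in condition 2) and the explicit verification that $m_ig_i\in\cN$ in the outer case---but these are elaborations of the same underlying argument rather than a different route.
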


\begin{proof}
Indeed, 
\begin{multline*}
((f_{i}g_i)(x))^*(f_{i}g_i)(x)
=((g_i)^*f_{i}(x))^*((g_i)^*f_{i}(x))
=(f_{i}(x))^*g_ig_i^*f_{i}(x)
\\
\le
||g_i||^2(f_{i}(x))^*f_{i}(x)\le(f_{i}(x))^*f_{i}(x).
\end{multline*}
This implies properties 1) and 2) of Definition \ref{dfn:admissyst*}. 
The property 3) is evident.
\end{proof}

\begin{lem}\label{bounded} Suppose $Y\subset \cN^0\subset\cN$ is bounded w.r.t. any seminorm of $(\cN,\cN^0)$-uniform structure. Then $Y$ is bounded in norm.
\end{lem}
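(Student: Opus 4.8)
The plan is to prove the contrapositive: assuming $Y$ is \emph{not} bounded in norm, I will construct a \emph{single} $\cN^0$-admissible system $X=\{x_i\}$ together with a single family of states $\F=\{\f_i\}$ for which $\sup_{y\in Y}\nu_{X,\F}(y)=\infty$, contradicting the hypothesis. The essential point to keep in mind is that boundedness with respect to the uniform structure is a statement about each \emph{individual} seminorm; it is therefore not enough to produce, for elements of large norm, seminorms that grow separately — one genuinely needs one fixed seminorm that is unbounded on $Y$.

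First I would fix positive reals $c_i$ with $\sum_i c_i^2\le 1$ (for instance $c_i=2^{-i/2}$). Since $Y$ is unbounded, for each $i$ I can choose $y^{(i)}\in Y$ with $\|y^{(i)}\|\ge i/c_i$, and set $u_i:=y^{(i)}/\|y^{(i)}\|$ (a unit vector of $\cN^0$) and $x_i:=c_i u_i\in\cN$, so that $\|x_i\|=c_i\le 1$. The scaling is exactly what restores admissibility: by the Cauchy--Schwarz inequality (\ref{eq:cau_schw}),
\[
\sum_i \<x,x_i\>\<x_i,x\>=\sum_i c_i^2\,\<x,u_i\>\<u_i,x\>\le\Big(\sum_i c_i^2\Big)\<x,x\>\le\<x,x\>
\]
for every $x\in\cN^0$, and since the $i$-th summand has norm at most $c_i^2\|x\|^2$ the series converges absolutely in norm. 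Hence $X$ is $\cN^0$-admissible in the sense of Definition \ref{dfn:admissyst}.

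Next I would choose the states. For each $i$ the element $\<y^{(i)},u_i\>=\<y^{(i)},y^{(i)}\>/\|y^{(i)}\|$ is positive of norm $\|y^{(i)}\|$, so there is a state $\f_i$ with $\f_i(\<y^{(i)},u_i\>)=\|y^{(i)}\|$ (alternatively, one may invoke Lemma \ref{lem:estimfornonpositive} at the cost of a harmless factor $2$). Put $\F=\{\f_i\}$. Evaluating the seminorm at $y^{(i)}$, and retaining in $\sup_k\sum_{j=k}^{\infty}$ only the choice $k=i$ and the single summand indexed by $x_i$, gives
\[
\nu_{X,\F}(y^{(i)})^2\ \ge\ \sum_{j=i}^{\infty}\big|\f_i(\<y^{(i)},x_j\>)\big|^2\ \ge\ \big|\f_i(\<y^{(i)},x_i\>)\big|^2=c_i^2\,\big|\f_i(\<y^{(i)},u_i\>)\big|^2=c_i^2\|y^{(i)}\|^2\ge i^2 ,
\]
so $\nu_{X,\F}(y^{(i)})\ge i\to\infty$, i.e.\ $\nu_{X,\F}$ is unbounded on $Y$, the desired contradiction.

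The main (and essentially only) obstacle is the admissibility constraint: one cannot simply feed the normalized elements $y^{(i)}/\|y^{(i)}\|$ directly into the system, since an arbitrary sequence of unit vectors need not satisfy the Bessel-type bound of Definition \ref{dfn:admissyst}. Damping them by coefficients $c_i$ with $\sum_i c_i^2\le 1$ repairs admissibility, and the resulting loss is compensated by selecting $y^{(i)}$ of norm at least $i/c_i$, which is possible precisely because $Y$ fails to be norm-bounded. Finally, since the seminorms $\nu_{X,\F}$ of the old structure are particular cases of the seminorms $\nu_{F,\F}$ defining the finer ($*$, adjointable, locally adjointable and outer) structures, the same conclusion holds a fortiori for each of those as well.
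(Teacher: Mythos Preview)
Your proof is correct and follows essentially the same route as the paper's: both argue by contraposition, choose elements $y^{(i)}\in Y$ of rapidly growing norm, damp their normalizations by a square-summable sequence to obtain an $\cN^0$-admissible system via the Cauchy--Schwarz inequality, and then pick for each $i$ a state witnessing the norm of $\<y^{(i)},y^{(i)}\>$, so that the $i$-th diagonal term alone forces $\nu_{X,\F}(y^{(i)})\to\infty$. The only differences are cosmetic (the paper uses $c_i=2^{-i}$ and $\|y^{(i)}\|\ge 3^i$, together with Lemma~\ref{lem:estimfornonpositive} yielding a harmless factor $1/2$), and your closing remark on the finer uniform structures matches the paper's subsequent corollaries.
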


\begin{proof}
Suppose that $Y$ is not norm-bounded. Then there exists a sequence $\{z_k\}_{k\in\N}\subset\cN^0$ such that $||z_k||\ge3^k$. Define $x_k=\frac{z_k}{||z_k||2^k}$. The collection $X=\{x_k\}$ is $\cN^0$-admissible, even $\cN$-admissible, by the following inequality: $\sum\limits_k\<x,x_k\>\<x_k,x\>\le\sum\limits_k\frac{1}{4^k}\<x,x\>\le\<x,x\>$ for any $x\in\cN$. For any $k\in\N$ take a state $\f_k$ such that $\f_k(\<z_k, z_k\>)\ge\frac{1}{2}||\<z_k,z_k\>||=\frac{1}{2}||z_k||^2$. Then
$$
\nu_{X,\F}(z_k)\ge |\f_k(\<z_k,x_k\>)|=\frac{1}{||z_k||2^k}|\f_k(\<z_k,z_k\>)|\ge\frac{1}{||z_k||2^{k+1}}||z_k||^2\ge\frac{3^k}{2^{k+1}}, 
$$
i.e. $Y$ is not bounded w.r.t. the seminorm $\nu_{X,\F}$. A contradiction.
\end{proof}

\begin{cor}
Suppose $Y\subset \cN^0\subset\cN$ is bounded w.r.t. any seminorm of $(\cN,\cN^0)^*_{lad}$, $(\cN,\cN^0)^*_{ad}$ or $(\cN,\cN^0)^*$-uniform structure. Then $Y$ is bounded in norm.
\end{cor}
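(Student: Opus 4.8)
The plan is to reduce everything to Lemma \ref{bounded}, which already settles norm-boundedness for the old $(\cN,\cN^0)$-uniform structure. The key observation is that each of the three starred structures contains, among its defining seminorms, \emph{all} of the old seminorms $\nu_{X,\F}$; once this inclusion of families of seminorms is in place, the corollary follows at once.

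First I would recall the embedding $\cN\hookrightarrow\cN'$, $x\mapsto\widehat{x}$, with $\widehat{x}(z)=\langle x,z\rangle$, already used just after Definition \ref{dfn:totbaundset*}. By the discussion of multiplier modules, a functional of this form coming from a genuine module element is adjointable (indeed outer, with $\widehat{x}^*(a)=xa$), hence in particular locally adjointable. Thus, given any $\cN^0$-admissible system $X=\{x_i\}$, the associated system $\widehat{X}=\{\widehat{x_i}\}$ consists of adjointable, a fortiori locally adjointable, functionals in $\cN'$.

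Next I would verify that $\widehat{X}$ is $*$-$\cN^0$-admissible whenever $X$ is $\cN^0$-admissible, and that the corresponding seminorms coincide. This is the only routine step: since $\widehat{x_i}(x)=\langle x_i,x\rangle$, we have $(\widehat{x_i}(x))^*\widehat{x_i}(x)=\langle x,x_i\rangle\langle x_i,x\rangle$, so conditions 1) and 2) of Definition \ref{dfn:admissyst*} for $\widehat{X}$ are literally the admissibility conditions imposed on $X$, while condition 3) holds because $\|\widehat{x_i}\|\le\|x_i\|\le 1$. Using $\f(a^*)=\overline{\f(a)}$ one obtains $|\f(\langle x,x_i\rangle)|=|\f(\widehat{x_i}(x))|$, whence $\nu_{X,\F}=\nu_{\widehat{X},\F}$ on all of $\cN^0$.

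Consequently the family of old seminorms $\{\nu_{X,\F}\}$ is a subfamily of the defining seminorms of the $(\cN,\cN^0)^*_{ad}$-structure, and since adjointable implies locally adjointable, also of the $(\cN,\cN^0)^*_{lad}$- and $(\cN,\cN^0)^*$-structures. Hence, for each of the three structures, boundedness of $Y$ with respect to all of its seminorms forces boundedness with respect to every $\nu_{X,\F}$, and Lemma \ref{bounded} then yields norm-boundedness. I do not expect any genuine obstacle here: the whole argument is a bookkeeping of the inclusions of seminorm families (old $\subseteq$ adjointable $\subseteq$ locally adjointable $\subseteq$ general $*$) together with the trivial conjugation identity relating the two orderings of the inner product, both of which are already present in the text.
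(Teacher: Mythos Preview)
Your proposal is correct and is precisely the intended argument: the paper states this corollary without proof immediately after Lemma \ref{bounded}, relying on the fact (spelled out right after Definition \ref{dfn:totbaundset*}) that the old seminorms $\nu_{X,\F}$ arise as the special case $\nu_{\widehat{X},\F}$ inside each of the starred families. You have simply made explicit the inclusion chain old $\subseteq$ adjointable $\subseteq$ locally adjointable $\subseteq$ general that the paper takes for granted.
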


\begin{cor}
Suppose $Y\subset \cN^0\subset\cN$ is totally bounded w.r.t. any seminorm of $(\cN,\cN^0)$, $(\cN,\cN^0)^*_{lad}$, $(\cN,\cN^0)^*_{ad}$ or $(\cN,\cN^0)^*$-uniform structure. Then $Y$ is bounded in norm.
\end{cor}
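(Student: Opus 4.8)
The plan is to reduce the statement to Lemma \ref{bounded} and the first corollary above by means of the elementary fact that a subset of a pseudometric space which is totally bounded is automatically bounded in that pseudometric.

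First I would record that each defining pseudometric is the one attached to the corresponding seminorm in the usual translation-invariant way. Since the inner product and the functionals $f_i$ are additive, comparing (\ref{eq:defnmetr}) with (\ref{eq:defnnorm}) and (\ref{eq:defnmetr*}) with (\ref{eq:defnnorm*}) gives $d_{X,\F}(x,y)=\nu_{X,\F}(x-y)$ and $d_{F,\F}(x,y)=\nu_{F,\F}(x-y)$. As $\cN^0$ is a submodule we have $0\in\cN^0$, so $\nu_{X,\F}(y)=d_{X,\F}(y,0)$ and $\nu_{F,\F}(y)=d_{F,\F}(y,0)$.

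Next, fix an $\cN^0$-admissible pair $(X,\F)$ and assume $Y$ is totally bounded for $d_{X,\F}$. Taking $\e=1$ in Definition \ref{dfn:totbaundset} furnishes a finite $1$-net $y_1,\dots,y_n\in Y$. For an arbitrary $y\in Y$ pick $y_j$ with $d_{X,\F}(y,y_j)<1$; then by the (pseudometric) triangle inequality
\[
\nu_{X,\F}(y)=d_{X,\F}(y,0)\le d_{X,\F}(y,y_j)+d_{X,\F}(y_j,0)< 1+\max_{1\le i\le n}\nu_{X,\F}(y_i),
\]
a bound independent of $y$. Thus $Y$ is bounded with respect to the seminorm $\nu_{X,\F}$. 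Letting $(X,\F)$ range over all $\cN^0$-admissible pairs shows that $Y$ is bounded with respect to every seminorm of the $(\cN,\cN^0)$-uniform structure, and Lemma \ref{bounded} then gives that $Y$ is norm-bounded. The identical argument, carried out for a $*$-$\cN^0$-admissible pair $(F,\F)$ (in the plain, adjointable, or locally adjointable variant) and using the triangle inequality (\ref{eq:triangle_inequa_1}) for $\nu_{F,\F}$, produces boundedness with respect to every seminorm of the corresponding $*$-structure, and the first corollary above finishes those three cases.

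I do not anticipate any real obstacle: the only point needing a little care is the passage from the finitely many net points to a single uniform bound on $\nu(y)$, which is precisely supplied by the triangle inequality, after which the conclusion is a direct invocation of the boundedness results already proved.
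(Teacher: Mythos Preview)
Your argument is correct and matches the paper's intended reasoning: the corollary is stated without proof precisely because it follows immediately from the elementary fact that totally bounded implies bounded in each defining pseudometric, after which Lemma~\ref{bounded} (and its first corollary for the $*$-variants) applies. The only thing you spell out that the paper leaves implicit is the finite-net plus triangle-inequality step, which is exactly right.
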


\section{The case of multipliers}
\begin{lem}\label{approx}
For any $\A$-module $\cN$, any $f\in\cN'$, any state $\f$ on $\A$ and any $n\in\N$ there exists a positive $g_n\in\A$, $||g_n||\le1$, such that

$$
|\f(f(x))-\f((fg_n)(x))|=|\f(f(x))-\f(g_nf(x))|\le\frac{||f(x)||}{n}\le\frac{||f||\cdot||x||}{n}
$$ 
for any $x\in\cN$.
\end{lem}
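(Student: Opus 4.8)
The plan is to reduce the whole statement to Lemma \ref{uniform}, whose uniform estimate is precisely what is required. First I would account for the first (asserted) equality in the statement. By the right $\A$-module structure on the dual module $\cN'$, given by $fa(z)=a^*f(z)$, one has $(fg_n)(x)=g_n^* f(x)$; since the element $g_n$ that we shall produce is positive, hence self-adjoint, this equals $g_n f(x)$. So the equality $|\f(f(x))-\f((fg_n)(x))|=|\f(f(x))-\f(g_nf(x))|$ is just a matter of matching the action on $\cN'$ with the self-adjointness of $g_n$.

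Next I would invoke the ``Moreover'' part of Lemma \ref{uniform} applied to the given state $\f$: for the prescribed $n\in\N$ it furnishes a positive element $g_n\in\A$ with $\|g_n\|\le 1$ such that
$$|\f(a)-\f(g_n a)|\le\frac{\|a\|}{n}\qquad\text{for every }a\in\A.$$
The decisive feature is that $g_n$ depends only on $\f$ and $n$, while the estimate holds uniformly over all of $\A$. Substituting $a=f(x)\in\A$ for an arbitrary $x\in\cN$ then gives
$$|\f(f(x))-\f(g_n f(x))|\le\frac{\|f(x)\|}{n}.$$
Finally, since $f\in\cN'$ is a bounded $\A$-functional, $\|f(x)\|\le\|f\|\cdot\|x\|$, which yields the last inequality and completes the argument.

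There is essentially no genuine obstacle here: all the analytic content is already contained in the uniform bound of Lemma \ref{uniform}, and the only points requiring care are the identification $(fg_n)(x)=g_n^* f(x)=g_n f(x)$ using positivity of $g_n$, and the observation that the single $g_n$ supplied by Lemma \ref{uniform} works \emph{simultaneously} for all $x\in\cN$ precisely because the estimate there is uniform over $\A$ and the values $f(x)$ range inside $\A$.
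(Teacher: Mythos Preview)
Your proof is correct and follows essentially the same approach as the paper: both invoke the ``Moreover'' clause of Lemma~\ref{uniform} to obtain the positive $g_n$ with the uniform estimate, then substitute $a=f(x)$ and use the module action $(fb)(\cdot)=b^*f(\cdot)$ together with self-adjointness of $g_n$ to identify $(fg_n)(x)$ with $g_nf(x)$. If anything, your write-up is slightly more explicit about why the single $g_n$ works simultaneously for all $x$.
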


\begin{proof}
From Lemma \ref{uniform} we have that for any $n\in \N$ there exists a positive $g_n\in\A$, $||g_n||\le1$, such that $|\f(y)-\f(g_ny)|\le\frac{||y||}{n}$ for any $y\in\A$. By taking $y=f(x)$ we have $|\f(f(x))-\f(g_nf(x))|\le\frac{||f(x)||}{n}\le\frac{||f||\cdot||x||}{n}$ for any $x\in\cN$, then note that $(fb)(\cdot)=b^*f(\cdot)$ for arbitrary $b\in\A$.
\end{proof}

Now, for any $\e>0$ and any $x$ such that $||x||\le d$ there exists $n(i)\in\N$ such that
$$
|\f(f(x))-\f((fg_{n(i)})(x))|\le\frac{\e}{2^i}.
$$

If $f\in M(\cN)$ (for example it is always so if $f$ is locally adjointable and $\cN=\A$ as a module over itself), then $fg_{n(i)}\in\cN$ and $fg_{n(i)}(x)=\<fg_{n(i)},x\>=g_{n(i)}\<f,x\>$.

Moreover, for every finite family of states $\{\f_j\}_{j=1}^L$ we can find $n(i)$ such that
$$
|\f_j(f(x))-\f_j((fg_{n(i)})(x))|\le\frac{\e}{2^i}
$$
for all $j=1,\dots,L$.

\begin{teo}\label{teo:main_out}
Let $Y$ be a subset of $\cN^0\subset\cN$. 
Then $Y$ is $(\cN,\cN^0)_{out}$-totally bounded
if and only if it is $(\cN,\cN^0)$-totally bounded.
\end{teo}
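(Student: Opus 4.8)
The plan is to prove the two implications separately, the reverse direction (``outer $\Rightarrow$ old'') being essentially a restriction and the forward direction (``old $\Rightarrow$ outer'') requiring the multiplier approximation developed just before the statement.

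For the easy direction, suppose $Y$ is $(\cN,\cN^0)_{out}$-totally bounded and let $X=\{x_i\}\subseteq\cN$ be any $\cN^0$-admissible system with states $\F$. The functionals $\widehat{x_i}(z)=\<x_i,z\>$ lie in $M(\cN)$ because $\cN\subseteq M(\cN)$, and they form an outer $*$-$\cN^0$-admissible system: indeed $\sum_i(\widehat{x_i}(x))^*\widehat{x_i}(x)=\sum_i\<x,x_i\>\<x_i,x\>$ is exactly the series controlled by $\cN^0$-admissibility, and $\|\widehat{x_i}\|\le\|x_i\|\le1$. Using the identity $|\f(\<x,x_i\>)|=|\f(\widehat{x_i}(x))|$ recorded after (\ref{eq:defnmetr*}), one gets $\nu_{\widehat X,\F}=\nu_{X,\F}$, so any finite $\e$-net in $Y$ for $d_{\widehat X,\F}$ is simultaneously an $\e$-net for $d_{X,\F}$. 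Hence $Y$ is $(\cN,\cN^0)$-totally bounded.

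For the forward direction, assume $Y$ is $(\cN,\cN^0)$-totally bounded; then $Y$ is norm-bounded, say $\|y\|\le d$, by Lemma \ref{bounded} and the corollary following it. Fix an arbitrary outer $*$-$\cN^0$-admissible $F=\{f_i\}$ with $f_i\in M(\cN)$, states $\F=\{\f_k\}$, and $\e>0$. The idea is to replace each multiplier $f_i$ by a genuine element of $\cN$ without disturbing the seminorm much. For each $i$, apply the finite-family version of Lemma \ref{approx} to the finitely many states $\f_1,\dots,\f_i$, obtaining $g_{n(i)}\in\A$ with $\|g_{n(i)}\|\le1$ and $|\f_k(f_i(x))-\f_k((f_ig_{n(i)})(x))|\le \e/2^i$ for all $k\le i$ and all $\|x\|\le2d$. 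Set $h_i:=f_ig_{n(i)}$. Since $\cN=M(\cN)\A$ we have $h_i\in\cN$, and by Lemma \ref{new_adm} the system $X:=\{h_i\}$ is $\cN^0$-admissible in the old sense. This is precisely the point where the outer hypothesis is indispensable: the approximants remain inside $\cN$ and form an admissible system for the old structure with the \emph{same} states $\F$.

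The main step is the comparison of $\nu_{F,\F}$ and $\nu_{X,\F}$ on vectors of norm $\le2d$. For each fixed $k$, the $\ell^2$-triangle inequality gives $\bigl|(\sum_{i\ge k}|\f_k(f_i(x))|^2)^{1/2}-(\sum_{i\ge k}|\f_k(h_i(x))|^2)^{1/2}\bigr|\le(\sum_{i\ge k}|\f_k((f_i-h_i)(x))|^2)^{1/2}\le(\sum_{i\ge k}\e^2/4^i)^{1/2}\le\e/\sqrt3$, the per-term bound $\e/2^i$ applying because every index in the tail satisfies $i\ge k$. Taking the supremum over $k$ on both sides yields $|\nu_{F,\F}(x)-\nu_{X,\F}(x)|\le\e/\sqrt3$, and applying this to $x-y$ (with $\|x-y\|\le2d$) gives $|d_{F,\F}(x,y)-d_{X,\F}(x,y)|\le\e/\sqrt3$ for all $x,y\in Y$. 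I expect this bookkeeping to be the only real obstacle: one must arrange a single geometric budget $\e/2^i$ so that the doubly-indexed $\sup_k\sum_{i\ge k}$ structure is controlled uniformly over the bounded set $Y$. To conclude, run the construction with budget $\e/2$ so that $\e/\sqrt3<\e/2$, then use $(\cN,\cN^0)$-total boundedness to pick a finite $\e/2$-net $y_1,\dots,y_n$ in $Y$ for $d_{X,\F}$. For $y\in Y$, choosing $y_j$ with $d_{X,\F}(y,y_j)<\e/2$ and combining with (\ref{eq:triangle_inequa_1}) and the comparison estimate gives $d_{F,\F}(y,y_j)<\e$, so $\{y_1,\dots,y_n\}$ is the desired $\e$-net for $d_{F,\F}$.
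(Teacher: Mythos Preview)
Your proof is correct and follows essentially the same approach as the paper: approximate each multiplier $f_i$ by $f_i g_{n(i)}\in\cN$ using Lemma~\ref{approx} (finite-family form), verify admissibility via Lemma~\ref{new_adm}, and compare the seminorms on the norm-bounded set $Y$. The only cosmetic difference is that you carry out the comparison via the $\ell^2$ reverse triangle inequality, whereas the paper expands $|\f_k(\<f_i,x\>)|^2$ directly; also, you spell out the trivial direction and invoke Lemma~\ref{bounded} explicitly, both of which the paper leaves implicit.
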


\begin{proof}
For any outer $\cN^0$-admissible system $F=\{f_i\}$ of multipliers of $\cN$ and any countable collection $\F=\{\f_j\}$ of states on $\A$
it is sufficient to find, for arbitrary $\e>0$,
a $\cN^0$-admissible system $X=\{x_i\}$ in $\cN$ such that for any $x\in \cN^0$
with $\|x\|\le \mathrm{diam} (Y)=:d<\infty$ we have
$$
d_{F,\F}(x,0)\le   d_{X,\F}(x,0) +\e.
$$
Indeed, this means that an $\e$-net on $Y$ for $d_{X,\F}$ is a $2\e$-net for $d_{F,\F}$.
We may consider
\begin{equation}\label{eq:epsi2}
\e<1, \mbox{ hence }\e^4<\e^2.
\end{equation}
By Lemma \ref{uniform}, for each $i=1,2,\dots$ 
we can find 
$g_{n(i)}\ge 0$ such that, for all $x\in Y$, we have
\begin{equation}\label{eq:main_estim2}
|\f_k(\<f_i,x\>)-\f_k(g_{n(i)}\<f_i,x\>)|\le\frac{\e^2}{2^i\cdot4\max\{1,d\}} ,\qquad k=1,\dots,i,
\end{equation}
i.e. $x_i= f_i g_{n(i)} \in \cN$.  
The system $\{x_i\}$ is $\cN^0$-admissible due to Lemma \ref{new_adm}.
For arbitrary $x\in Y$ and arbitrary $k,i\in \N$, $k\le i$, we have
$$
|\f_k(\<f_i,x\>)|\le|\f_k(\<f_i,x\>)-\f_k(\<x_i,x\>)|+|\f_k(\<x_i,x\>)|. 
$$
Hence, by (\ref{eq:main_estim2}) and (\ref{eq:epsi2})
\begin{multline*}
|\f_k(\<f_i,x\>)|^2 
\le|\f_k(\<f_i,x\>)-\f_k(\<x_i,x\>)|^2\\ +2|\f_k(\<f_i,x\>)-\f_k(\<x_i,x\>)||\f_k(\<x_i,x\>)|
+|\f_k(\<x_i,x\>)|^2\\
\le\frac{\e^2}{2^i\cdot4}+2\frac{\e^2}{2^i\cdot4d }d+|\f_k(\<x_i,x\>)|^2\le\frac{\e^2}{2^i}+|\f_k(\<x_i,x\>)|^2.
\end{multline*}
Then
$$
\sum\limits_{i=k}^\infty|\f_k(\<f_i,x\>)|^2\le\e^2+\sum\limits_{i=k}^\infty|\f_k(\<x_i,x\>)|^2.
$$
Thus, using  $\sqrt{s+t}\le \sqrt{s+ 2\sqrt{st} +t}=\sqrt{s}+\sqrt{t}$, for $s,t\ge 0$, we obtain
$$
\sqrt{\sum\limits_{i=k}^\infty|\f_k(\<f_i,x\>)|^2}\le\e+\sqrt{\sum\limits_{i=k}^\infty|\f_k(\<x_i,x\>)|^2}.
$$
Taking at first the supremum on the right hand side and then on the left hand side, we obtain 
$$
\sup\limits_k\sqrt{\sum\limits_{i=k}^\infty|\f_k(\<f_i,x\>)|^2}\le\e+\sup\limits_{k}\sqrt{\sum\limits_{i=k}^\infty|\f_k(\<x_i,x\>)|^2},
$$
i.e. $d_{F,\F}(x,0)\le   d_{X,\F}(x,0) +\e$ as desired.
\end{proof}

\section{Proof of the main theorem}

\subsection{Proof of the main theorem for $\cN=\A$}
This proof is quite similar to the previous one.
From Lemma \ref{approx} we have the following as a corollary.

\begin{teo}\label{teo:approx_via_au}
Suppose, that $\A$ be a $C^*$-algebra
with an approximate unit $\omega_\alpha$, $\f$ is a state on $\A$, $f$ is a (locally) adjointable 
$\A$-linear functional 
(i.e. $f\in M (\A)$, $f(a)=f^*a$) and $\e>0$ an arbitrary number. Then, for a sufficiently large $\alpha$,
and any $a \in \A$ ,
$$
|\f((1-\omega_\alpha) f (a))| 
=|\f(   (f^*-(f\omega_\alpha)^*)  a)|
< \e \|a\|.
$$ 
\end{teo}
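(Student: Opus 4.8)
The plan is to reduce the whole statement to the uniform–convergence-on-bounded-sets part of Lemma \ref{uniform}, which is the only nontrivial analytic input; everything else is bookkeeping with the multiplier representation. First I would verify the displayed equality. By hypothesis $f\in M(\A)$ and $f(a)=f^*a$, with $\|f^*a\|\le\|f\|\,\|a\|$ since $\A$ is an ideal in $M(\A)$. Using the right module structure $(fb)(z)=b^*f(z)$ on $\cN'$ together with $\omega_\alpha^*=\omega_\alpha$ (the approximate unit is positive), one computes $(f\omega_\alpha)(z)=\omega_\alpha f(z)=\omega_\alpha f^* z$, so that $(f\omega_\alpha)^*=\omega_\alpha f^*$ and hence $f^*-(f\omega_\alpha)^*=(1-\omega_\alpha)f^*$. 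Therefore $(f^*-(f\omega_\alpha)^*)a=(1-\omega_\alpha)f^*a=(1-\omega_\alpha)f(a)$ as elements of $\A$, so the two quantities compared in the statement are literally equal, and it suffices to estimate $|\f((1-\omega_\alpha)f(a))|$.

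Next I would set $y:=f(a)$ and rewrite $(1-\omega_\alpha)f(a)=y-\omega_\alpha y$. The key observation is that, although $y$ is not confined to a bounded set as $a$ ranges over all of $\A$, the \emph{normalized} image is: $\{f(a')\colon \|a'\|\le 1\}$ is contained in the ball of radius $\|f\|$, a bounded subset of $\A$. Applying Lemma \ref{uniform} to the state $\f$ and the approximate unit $\{\omega_\alpha\}$ on this bounded set gives $\sup_{\|a'\|\le 1}|\f(f(a')-\omega_\alpha f(a'))|\to 0$, so I may fix $\alpha_0$ such that this supremum is $<\e$ for all $\alpha\ge\alpha_0$.

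Finally I would scale back: for $a\ne 0$ put $a'=a/\|a\|$ and use the $\C$-linearity of $f$ to write $f(a)=\|a\|\,f(a')$, whence for $\alpha\ge\alpha_0$ we get $|\f((1-\omega_\alpha)f(a))|=\|a\|\,|\f((1-\omega_\alpha)f(a'))|<\e\|a\|$, the case $a=0$ being trivial. Combined with the identity of the first paragraph this is exactly the claimed estimate in both of its forms. I expect no genuine obstacle: the analytic heart is entirely Lemma \ref{uniform}, so the only points requiring care are the multiplier bookkeeping of the first step (in particular that the self-adjointness of $\omega_\alpha$ is what forces $(f\omega_\alpha)^*=\omega_\alpha f^*$ and hence makes the two displayed quantities coincide) and the renormalization argument that converts ``uniform on bounded sets'' into the homogeneous bound $\e\|a\|$.
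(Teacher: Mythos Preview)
Your proposal is correct and follows essentially the same route as the paper: the paper simply records Theorem~\ref{teo:approx_via_au} as a corollary of Lemma~\ref{approx}, whose proof is precisely the substitution $y=f(x)$ into Lemma~\ref{uniform}, followed by the same homogeneity/normalization argument you spell out. Your additional verification of the displayed identity via $(f\omega_\alpha)^*=\omega_\alpha f^*$ (using $\omega_\alpha=\omega_\alpha^*$) is the piece of multiplier bookkeeping the paper leaves implicit.
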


Now let $F=\{f_i\}$ be a $*$-admissible system of locally adjointable (=adjointable) functionals for $\A$ and $\F=\{\f_j\}$ a countable collection
of states on $\A$. 
As in the previous section, to prove the main theorem for $\cN=\A$ it is sufficient to find, for arbitrary $\e>0$ (again suppose that $\e<1$),
an admissible system $X=\{x_i\}$ in $\A$ such that for any $a\in \A$
with $\|a\|\le \mathrm{diam} (Y)=:d<\infty$ we have
$$
d_{F,\F}(a,0)\le   d_{X,\F}(a,0) +\e.
$$
By Theorem \ref{teo:approx_via_au}, for each $i=1,2,\dots$ and the above described $a$, we can find 
$\a(i)$ such that for $j=1,\dots,i$,
\begin{equation}\label{eq:main_estim}
|\f_j((f_i^* - \omega_{\a(i)} f_i^* )a)|
=|\f_j((f_i^* - ( f_i\omega_{\a(i)})^* )a)|
<\frac 1{16 \max\{1,d\}} \cdot \frac{\e^2}{2^i},
\end{equation}
i.e. $x_i^*= (f_i \omega_{\a(i)})^* \in \A$ (this is possible only if $f_i \in M(\A)$, not only $RM(\A)$).
Then, for arbitrary $a\in \A$, choose a number $k$ such that
\begin{flalign*}
d_{F,\F}(a,0)\le& \left(\sum_{i=k}^\infty |\f_k(f_i^* a)|^2\right)^{1/2} + \frac \e 2 \\
\le & \left(\sum_{i=k}^\infty |\f_k(f_i^* a - x_i^* a)+ \f_k(x_i^*a)|^2\right)^{1/2} + \frac \e 2 \\
\end{flalign*}
$$
=\left(\sum_{i=k}^\infty |\f_k(f_i^* a - x_i^* a)|^2+ 2 |\f_k(f_i^* a - x_i^* a)|\cdot  |\f_k(x_i^*a)|+  |\f_k(x_i^*a)|^2\right)^{1/2} + \frac \e 2 
$$
\begin{flalign*}
\le & \left(\sum_{i=k}^\infty \left(\left(\frac 1 {16} \cdot \frac{\e^2}{2^i}\right)^2+ \frac 1 8 \cdot \frac{\e^2}{2^i}\right)+  (d_{X,\F}(a,0))^2\right)^{1/2} + \frac \e 2  \qquad\qquad \text{(by (\ref{eq:main_estim})}\\  
\le & \left(\frac 14 \sum_{i=k}^\infty   \frac{\e^2}{2^i} +  (d_{X,\F}(a,0))^2\right)^{1/2} + \frac \e 2 
 \qquad\qquad \text{(by 
 the condition $\e<1$
 )}\\
= &   \left(\frac{\e^2}{4} +  (d_{X,\F}(a,0))^2\right)^{1/2} + \frac \e 2 \le
d_{X,\F}(a,0)+\e, 
\end{flalign*}
because, as above, $\sqrt{s+t}\le 
\sqrt{s}+\sqrt{t}$, for $s,t\ge 0$.

From Lemma \ref{new_adm} it follows that $X=\{x_i\}$ is an admissible system.

\subsection{Proof of the main theorem for $\cN=\ell^2(\A)$}
To reduce this case to the case of $\cN=\A$, recall the description of arbitrary functional $f:\ell^2(\A) \to \A$ from \cite[Theorem 2.3]{Bak2019}.

In that paper the inner product is defined to be anti-linear on second variable, 
and embedding $\ell^2(\A)\to\ell^2(\A)'$ is defined by the formula $x\mapsto\widehat{x}(\cdot)=\<\cdot,x\>$, so it is extended for left multipliers $x$ by the formula $\widehat{x}(y)=\sum\limits_{s=1}^\infty y_sx^*_s$
as an isometric isomorphism $\ell^2_{strong}(LM(\A))\to\ell^2(\A)'$.  Hence $x$ needs to satisfy the condition $\sup\limits_N \left\|\sum\limits_{s=1}^Nx_sx_s^* \right\|<\infty$.

In our case we define the inner product to be anti-linear on first variable and define embedding $\ell^2(\A)\to\ell^2(\A)'$ by the formula $x\mapsto\widehat{x}(\cdot)=\<x,\cdot\>$, hence after extension the embedding in our case $x$ needs to be a a sequence of right multipliers acting by the formula $\widehat{x}(y)=\sum\limits_{s=1}^\infty x^*_sy_s$ and it must satisfy the condition $\sup\limits_N\left\|\sum\limits_{s=1}^Nx_s^*x_s \right\|<\infty$.

So, any functional $f:\ell^2(\A) \to \A$ can be described
as a sequence $f_s \in RM(\A)$, $s=1,2,\dots$, such that the partial sums of the series $\sum_s f_s^* f_s$ are
uniformly bounded.
If $f$ is locally adjointable, then $f_s \in M(\A)$, $s=1,2,\dots$, because the inclusion of $\A$ into $\ell^2(\A)$
(as the $s^{th}$ summand) is adjointable.

Since the partial sums of the series $\sum_s f_s^* f_s$ are
uniformly bounded, the series $\sum_s \f(f_s^* f_s)$ is convergent, where $\f$ is an arbitrary state on $\A$.
Here we consider any state as a state on $M(\A)$ due to \cite[2.3.24]{BrRob}.

By Lemma \ref{bounded} there exists $d<\infty$ such that $\|x\|\le d$ for any $x\in Y$.

So, for a $*$-$\cN^0$-admissible system $F=\{f_i\}$, $f_i=(f_{i,1}, f_{i,2},\dots)$ we first choose a finite part
of each functional $(f_{i,1}, f_{i,2},\dots,f_{i,r(i)},0,\dots)$ such that
\begin{equation}\label{eq:first_for_ell2}
\sum_{s=r(i)+1}^\infty \f_k(f_{i,s}^* f_{i,s}) < \frac{\e^2}{2^i\cdot4d^2} \,\,  \text{for all} \,\, k=1,\dots,i.
\end{equation}

Note that for any $k,i,p,q\in\N$, $k\le i$, $r(i)\le p\le q$, the function $(a,b)\mapsto\f_k(\sum\limits_{s=p}^qa_s^*b_s)$ is a complex inner product on $\ell^2_{strong}(M(\A))$, so by the Cauchy-Schwartz inequality we have
\begin{eqnarray*}
\left|\f_k\left(\sum\limits_{s=p}^qf_{i,s}^*x_s\right)\right|^2&\le&
{\f_k\left(\sum\limits_{s=p}^qf_{i,s}^*f_{i,s}\right)\f_k\left(\sum\limits_{s=p}^qx_s^*x_s\right)}\\
&\le& {\f_k\left(\sum\limits_{s=r(i)+1}^\infty f_{i,s}^*f_{i,s}\right)||x||^2}\le\frac{\e^2d^2}{2^i\cdot4d^2}
=\frac{\e^2}{2^i\cdot4}.
\end{eqnarray*}

Second, we approximate each multiplier $f_{i,s}$, $s=1,\dots,r(i)$, by choosing $g_{n(i)}$ for $f_{i,s}g_{n(i)}$ as in the previous section (by  Lemma \ref{uniform}):
\begin{equation}\label{eq:sec_for_ell2}
|\f_k(f_{i,s}^*x)-\f_k(g_{n(i)}f_{i,s}^*x)|\le\frac{\e}{2^i\cdot4r(i)} ,\qquad  \text{for all} \,\, k=1,\dots,i.
\end{equation}

Hence, by using the inequality for scalars $|a+b|^2\le2|a|^2+2|b|^2$ we have for all $k\le i$
\begin{multline*}
|\f_k(f_i(x))|^2
=
\left|\f_k \left(\sum\limits_{s=1}^\infty f_{i,s}^* x_s \right)\right|^2
\\
\le
2\left|\f_k\left(\sum\limits_{s=1}^{r(i)} f_{i,s}^* x_s\right)\right|^2
+2\left|\f_k\left(\sum\limits_{s=r(i)+1}^\infty f_{i,s}^* x_s\right)\right|^2\le
\end{multline*}
\begin{multline*}
\le
4\left|\f_k\left(\sum\limits_{s=1}^{r(i)} g_{n(i)}f_{i,s}^* x_s\right)\right|^2+
4\left|\f_k\left(\sum\limits_{s=1}^{r(i)} f_{i,s}^* x_s\right)-
\f_k\left(\sum\limits_{s=1}^{r(i)} g_{n(i)}f_{i,s}^* x_s\right)\right|^2
\\+
\frac{\e^2}{2^i\cdot2}\le
\end{multline*}
$$
\le
4\left|\f_k\left(\sum\limits_{s=1}^{r(i)} g_{n(i)}f_{i,s}^* x_s\right)\right|^2+
4\frac{\e^2}{4^i\cdot16}+
\frac{\e^2}{2^i\cdot2}.
$$
Thus,
$$
\sum\limits_{i=k}^\infty|\f_k(f_i(x))|^2\le\sum\limits_{i=k}^\infty4\left|\f_k\left(\sum\limits_{s=1}^{r(i)} g_{n(i)}f_{i,s}^* x_s\right)\right|^2+\e^2.
$$
Hence, using again  $\sqrt{s+t}\le \sqrt{s+ 2\sqrt{st} +t}=\sqrt{s}+\sqrt{t}$, for $s,t\ge 0$,
we obtain
$$
\sqrt{\sum\limits_{i=k}^\infty|\f_k(f_i(x))|^2}\le2\sqrt{\sum\limits_{i=k}^\infty\left|\f_k\left(\sum\limits_{s=1}^{r(i)} g_{n(i)}f_{i,s}^* x_s\right)\right|^2}+\e.
$$
Taking at first the supremum on the right hand side and then on the left hand side, 
we obtain
$$
\sup\limits_k\sqrt{\sum\limits_{i=k}^\infty|\f_k(f_i(x))|^2}\le2\sup\limits_k
\sqrt{\sum\limits_{i=k}^\infty \left|\f_k\left(\sum\limits_{s=1}^{r(i)} g_{n(i)}f_{i,s}^* x_s\right)\right|^2}+\e,
$$
i.e. if we denote $X=\{x_i\}$, $x_i=(f_{i,1}g_{n(i)},f_{i,2}g_{n(i)},\dots,f_{i,r(i)}g_{n(i)},0,\dots )\in\ell^2(\A)$,
$$
\sup\limits_k\sqrt{\sum\limits_{i=k}^\infty|\f_k(f_i(x))|^2}\le2\sup\limits_k\sqrt{\sum\limits_{i=k}^\infty|\f_k(\<x_i, x\>)|^2}+\e.
$$
Thus,
$$
d_{F,\F}(x,0)\le2d_{X,\F}(x,0)+\e,
$$
for any $x\in Y$, i.e. an $\e$-net on $Y$ for $d_{X,\F}$ is a $3\e$-net for $d_{F,\F}$.
It is easy to see that $X$ is $(\cN,\cN^0)$-admissible. 

\subsection{Proof of the main theorem in the general case}
The general case of a countably generated module $\cN$ can be reduced to the above considered case of $\cN=\ell^2(\A)$ using the
Kasparov stabilization theorem
by considering the module as a direct summand of the standard one and using the fact that uniform structures respect direct summand decomposition. More specifically, denote by $S$ the map $S:\cN\to\cN\oplus\ell^2(\A)\cong\ell^2(\A)$ given by the Kasparov theorem. 
Suppose that $Y$ is $(\cN,\cN^0)$-totally bounded.
Then by Lemma \ref{lem:directsum_totbu} it follows that $S(Y)$ is $(\ell^2(\A),S(\cN^0))$-totally bounded. So, by previous section $S(Y)$ is $(\ell^2(\A),S(\cN^0))^*_{lad}$-totally bounded, hence, again by Lemma \ref{lem:directsum_totbu} we have that $Y$ is $(\cN,\cN^0)^*_{lad}$-totally bounded.
The converse statement is obvious.

\end{document}